\documentclass[letter, 10 pt, conference]{ieeeconf}  

\IEEEoverridecommandlockouts                              

\overrideIEEEmargins                                      

\usepackage{amsmath} 

\usepackage{gensymb}
\usepackage{amsthm}
\usepackage{amssymb}  
\usepackage{stfloats}
\usepackage[noadjust]{cite}
\usepackage{graphicx}
\usepackage{enumerate}
\graphicspath{{./pics/}}

\theoremstyle{definition}
\newtheorem{definition}{Definition}[section]
\newtheorem{theorem}{Theorem}[section]
\newtheorem{corollary}{Corollary}[theorem]
\newtheorem{lemma}[theorem]{Lemma}
\theoremstyle{remark}
\newtheorem*{remark}{Remark}

\title{\LARGE \bf
	Output Feedback Controller Synthesis for Negative Imaginary Systems
}

\author{James Dannatt$^{1}$, Ian Petersen$^{2}$
\thanks{*This work was supported by the Australian Research Council under grants DP160101121 and DP190102158.}
\thanks{$^{1}$James Dannatt is with the Research School of Engineering at the Australian National University, ACT.
        {\tt\small james.dannatt@anu.edu.au}.}%
\thanks{$^{2}$Ian Petersen is with the Research School of Engineering at the Australian National University, ACT.
        {\tt\small i.r.petersen@gmail.com}.}%
}

\begin{document}

\maketitle
\thispagestyle{empty}
\pagestyle{empty}

\begin{abstract}
This paper presents necessary and sufficient conditions for deriving a strictly proper dynamic controller which satisfies the negative imaginary output feedback control problem. Our synthesis method divides the output feedback control problem into a state feedback problem and a dual output injection problem. Thus, a controller is formed using the solutions to a pair of dual algebraic Riccati equations. Finally, an illustrative example is offered to show how this method may be applied.
\end{abstract}

\section{Introduction}

Negative imaginary (NI) systems theory is concerned with stable systems with a relative degree of zero, one and two that have a phase response in the interval $[-\pi, 0]$ for non negative frequencies~\cite{Petersen2010}. These systems were first introduced in \cite{Lanzon2008}, motivated by the study of linear mechanical systems with collocated force inputs and position outputs. Since the introduction of NI systems theory, it has found use in a large number of applications. A summary of many of these applications can be found in \cite{petersen2016negative}. Along with introducing a definition of NI systems, \cite{Lanzon2008} showed that the positive feedback interconnection of an NI system with a strictly negative imaginary (SNI) system is internally stable as long as the closed-loop dc gain is less than unity. Thus, if the plant uncertainty of a system is known to be SNI, a feedback controller can be constructed such that the closed-loop system is NI. The resulting positive feedback interconnection can then guaranteed to be robustly stable under the appropriate DC gain condition~\cite{Petersen2010}. The construction of such a controller has prompted the development of both state and output feedback results.
\\\\
The state feedback control problem for negative imaginary systems is concerned with designing a controller $K$ for the positive feedback control scheme $u=Kx$. $K$ must be chosen such that the corresponding closed-loop system has the negative imaginary property when all of the systems state-variables are available for feedback. The earliest NI state feedback results were presented in \cite{Petersen2010} and \cite{5430931} using the solution of an LMI as the basis for controller design. \cite{Mabrok2012} then drew on the $H_{\infty}$ literature\footnote{See \cite{petersen1991first} and \cite{petersen1989complete}.}, in order to propose a state feedback controller synthesis method that used the solution to an ARE to form a controller. Unfortunately, neither of these methods were ideal as they allowed the closed-loop system to have poles at the origin. In fact an origin pole was unavoidable when a controller was designed using the method in \cite{Mabrok2012}. Recognizing this, \cite{Mabrok2012a,Mabrok2015} applied a perturbation of the plant matrix in the design process to guarantee asymptotic stability of the closed-loop system. This method ensured stability, however only sufficient conditions were offered and no proof was provided ensuring the perturbed system maintained the NI property. This was rectified in \cite{arXiv:1807.07212} were both necessary and sufficient conditions for state feedback controller synthesis were offered. The closed-loop system in \cite{arXiv:1807.07212} was shown to have both the NI property and a prescribed degree of stability.
\\\\
In practice the full system state is not always available for use. The negative imaginary output feedback control problem is then finding a controller $H$ for the control scheme $u=H\hat{x}$, where $\hat{x}$ is an estimate of the systems state-variables. $H$ is chosen such that the corresponding closed-loop system has the negative imaginary property. This problem was first addressed in \cite{xiong2016output} where both static and dynamic output controllers were offered. Alternatively, \cite{gab_negimagsynth,salcan2019negative} presented a method of controller synthesis using the solution to dual AREs as in \cite{sun1994solution}. In each of these cases, only sufficient conditions were offered.
\\\\
Here, we present necessary and sufficient conditions for deriving a dynamic controller that solves the output feedback control problem. Our approach uses the solution to two dual AREs like in \cite{gab_negimagsynth,petersen1991first,sun1994solution}. In our method, the solutions of our AREs may be obtained through the Schur decomposition of a matrix and the solution of two Lyapunov equations, avoiding the common problem of singular Hamiltonians associated with the NI AREs. Our controller results in a closed-loop system with the NI property. Thus, if the plant uncertainty is SNI, the resulting positive feedback interconnection will be robustly stable when the DC gain condition of \cite{Lanzon2008} is satisfied.
\\\\
To assist with exposition, proofs may be found in the appendix.

\section{Notation}

Let $\mathbb{R}$ and $\mathbb{C}$ denote the fields of real and complex numbers respectively. The notation $\operatorname{Im}[G(j\omega)]$ refers to the imaginary component of the frequency response $G(j\omega)$. Analogously $\operatorname{Re}[G(j\omega)]$ refers to the real component of $G(j\omega)$. $C^*$ refers to the complex conjugate transpose of a matrix or vector $C$ and ${R}^{\sim}(s)$ represents $R^T(-s)$. The notation $\rho(A)$ denotes the spectral radius of $A$.

\section{Preliminaries}

The following section provides a review of the relevant NI and SNI definitions and lemmas needed in proving our main result.
\\\\
Consider the linear time-invariant (LTI) system described by
\begin{IEEEeqnarray}{c} \label{system}
	\dot{x}(t) = Ax(t) + Bu(t) \nonumber \\
	y(t) = Cx(t) + Du(t)
\end{IEEEeqnarray}
where $A \in \mathbb{R}^{n \times n},$ $B \in \mathbb{R}^{n \times m},$ $C \in \mathbb{R}^{m \times n}$ and $D \in \mathbb{R}^{m \times m}$.
We denote the proper, real, rational transfer function matrix of this system as $G(s) = C(sI-A)^{-1}B + D$.

\begin{definition} \label{def: dual definition} [Dual System]
For the system (\ref{system}), the transpose of this system is defined as
\begin{IEEEeqnarray}{c} \label{system dual}
	\dot{x}(t) = A^Tx(t) + C^Tu(t) \nonumber \\
	y(t) = B^Tx(t) + D^Tu(t)
\end{IEEEeqnarray}
and is denoted by the proper, real-rational transfer function matrix $G(s)^T = B^T(sI-A^T)^{-1}C^T + D^T$.
\end{definition}

We now provide a formal definition of both negative imaginary and strictly negative imaginary systems.

\theoremstyle{definition}
\begin{definition} \label{def: NI definition}{ [Negative Imaginary] \cite{mabrok2011stability}\\ A square transfer function matrix $G(s)$ is NI if the following conditions are satisfied:}
	\begin{enumerate}
		\item $G(s)$ has no pole in $\operatorname{Re}[s]>0$.

		\item For all $\omega \geq 0$ such that $jw$ is not a pole of $G(s)$, $j(G(j\omega) - G(j\omega)^*) \geq 0$.

		\item If $s=j\omega_0$, $ \omega_0 > 0$ is a pole of $G(s)$ then it is a simple pole. Furthermore, if $s=j\omega_0$, $ \omega_0 > 0$ is a pole of $G(s)$, then the residual matrix $K = \lim_{s \to j\omega_0} (s-j\omega_0)jG(s)$ is positive semidefinite Hermitian.

		\item If $s=0$ is a pole of $G(s)$, then it is either a simple pole or a double pole. If it is a double pole, then, $\lim_{s \to 0} s^2G(s) \geq 0$.
	\end{enumerate}
\end{definition}

Also, an LTI system (\ref{system}) is said to be NI if the corresponding transfer function matrix $G(s) = C(sI-A)^{-1}B + D$ is NI.

\begin{definition} \label{def: SNI definition}
{[Strictly Negative Imaginary] \cite{mabrok2011stability,8287451} \\ A square transfer function matrix $G(s)$ is SNI if the following conditions are satisfied:}
	\begin{enumerate}
		\item $G(s)$ has no poles in $\operatorname{Re}[s] \geq 0$.

		\item For all $\omega > 0$ such that $j\omega$ is not a pole of $G(s)$, $j(G(j\omega) - G(j\omega)^*) > 0$.
	\end{enumerate}
\end{definition}

Also, an LTI system (\ref{system}) is said to be SNI if the corresponding transfer function matrix $G(s) = C(sI-A)^{-1}B + D$ is SNI.

\begin{lemma} \label{lemma: G NI implies sH PR} [$G(s)$ is NI $\iff$ $s(G(s)-D)$ is PR] \cite{patra2011stability} \\ A square, real, rational and proper transfer function matrix $G(s)$ is NI if and only if $F(s) = s(G(s)-D)$ is positive real (PR).
\end{lemma}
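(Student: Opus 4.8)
The plan is to prove the equivalence by matching, item by item, the defining conditions of $G$ being NI (Definition~\ref{def: NI definition}) against the standard frequency-domain characterisation of positive realness of $F(s)=s(G(s)-D)$. Write $F(s)=sC(sI-A)^{-1}B$; this is a real-rational proper transfer function with $F(\infty)=CB$, and its only candidate poles are those of $G$. Throughout we use $D=D^{T}$, which is standard in the NI/SNI setting. The bridge between the two sets of conditions is simply multiplication (on the open right half plane) or division (on the imaginary axis) by the scalar factor $s$, resp.\ $j\omega$; the whole proof is a careful accounting of how each NI condition transforms under this factor.

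\emph{Conditions away from the origin.} Since the factor $s$ contributes no poles and turns no zero into a pole, $G$ has no pole in $\operatorname{Re}[s]>0$ if and only if $F$ has none, matching analyticity of $F$ in the open right half plane. On the imaginary axis one has the identity $F(j\omega)+F(j\omega)^{*}=\omega\,j\bigl(G(j\omega)-G(j\omega)^{*}\bigr)$ for all real $\omega$ (here $D=D^{T}$ is used); dividing by $\omega>0$ shows that the PR inequality $F(j\omega)+F(j\omega)^{*}\ge0$ for $\omega>0$ is equivalent to condition~2 of Definition~\ref{def: NI definition}, and the range $\omega<0$ follows from the conjugate symmetry $F(-j\omega)=\overline{F(j\omega)}$. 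For an imaginary-axis pole at $s=j\omega_{0}$ with $\omega_{0}>0$, $F$ and $G$ have a pole there of the same order, and $\lim_{s\to j\omega_{0}}(s-j\omega_{0})F(s)=\omega_{0}K$ with $K=\lim_{s\to j\omega_{0}}(s-j\omega_{0})jG(s)$; hence the PR requirement that such a pole be simple with Hermitian positive semidefinite residue is exactly condition~3. Finally, $s=\infty$ is a pole of neither function ($F$ is proper), and the limiting behaviour there only imposes $CB+(CB)^{T}\ge0$ on both sides.

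\emph{The origin, and why it is the delicate point.} Multiplying by $s$ lowers the pole order at $0$ by one, so the correspondence here needs a short case analysis. If $s=0$ is not a pole of $G$, it is not a pole of $F$, and nothing beyond the $\omega\to0$ limit of the identity above is needed. If $s=0$ is a simple pole of $G$ with residue $R$, then $F$ is analytic at $0$ with $F(0)=R$, and the PR inequality evaluated at $\omega=0$ reads $R+R^{T}\ge0$, which is precisely the $\omega\to0^{+}$ limit of condition~2. If $s=0$ is a double pole of $G$, then $F$ has a \emph{simple} pole at $0$ whose residue is $\lim_{s\to0}s^{2}G(s)$, so the PR residue condition at the origin is condition~4. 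Reading the chain backwards via $G(s)=F(s)/s+D$, a removable singularity, resp.\ simple pole, of $F$ at $0$ forces a simple, resp.\ double, pole of $G$, with the residue identities reversing cleanly; one also checks that the PR inequality, required to hold on a whole neighbourhood of $\omega=0$, pins down the symmetry of the relevant low-order Laurent/Taylor coefficient needed by Definition~\ref{def: NI definition}. Collecting the four equivalences in both directions gives the lemma. I expect this origin bookkeeping --- ensuring the pole-order shift neither creates nor destroys a condition --- to be the only real obstacle; the remaining steps are a direct translation through the factor $s$.
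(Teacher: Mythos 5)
The paper itself offers no proof of this lemma --- it is imported verbatim from \cite{patra2011stability} --- so there is no in-paper argument to compare against; your proposal has to be judged on its own. Your condition-by-condition translation through the factor $s$ is the standard proof of this result and the bookkeeping is correct: the identity $F(j\omega)+F(j\omega)^{*}=\omega\, j\bigl(G(j\omega)-G(j\omega)^{*}\bigr)$ converts condition~2 of Definition~\ref{def: NI definition} into the PR inequality for $\omega>0$ (with $\omega<0$ by conjugate symmetry and $\omega=0$ by the limit); the residue of $F$ at a pole $j\omega_{0}$, $\omega_{0}>0$, is $\omega_{0}K$, so simplicity plus positive semidefinite Hermitian residue matches condition~3; and at the origin the pole-order drop by one correctly exchanges condition~4 (double pole of $G$, $\lim_{s\to0}s^{2}G(s)\geq0$) with a simple PR pole of $F$, while a simple pole of $G$ with residue $R$ becomes $F(0)=R$ and $R+R^{T}\geq0$, which is indeed the $\omega\to0^{+}$ limit of condition~2. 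Invoking the classical frequency-domain characterisation of positive realness for rational functions as your definition of PR is legitimate.

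One point you treat as a parenthetical deserves to be promoted to a hypothesis. Since $F(s)=sC(sI-A)^{-1}B$ does not depend on $D$ at all, positive realness of $F$ places no constraint whatsoever on $D$, and the ``if'' direction is false without assuming $D=D^{T}$: take $G(s)=D$ with $D$ nonsymmetric, so $F\equiv0$ is PR while $j(D-D^{T})$ is a nonzero trace-free Hermitian matrix and condition~2 fails. In the ``only if'' direction $D=D^{T}$ is not an extra assumption --- it is forced by condition~2 as $\omega\to\infty$ --- but for the converse it must be hypothesised, as it is in the original reference (there stated as $G(\infty)=G(\infty)^{T}$) and is not in the paper's transcription of the lemma. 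Your proof is correct once that symmetry hypothesis is made explicit; as a critique of the statement you were asked to prove, that observation is worth recording.
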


The following lemmas are required in the proof of our main result.

\begin{lemma} \label{lemma: Gs NI iff Gs^T NI definition} [$G(s)$ is NI $\iff$ $G(s)^T$ is NI] \cite{gab_negimagsynth} \\ A square, real, rational and proper transfer function matrix $G(s)$ is NI (respectively SNI) if and only if $G(s)^T$ is NI (respectively SNI).
\end{lemma}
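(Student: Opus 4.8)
The plan is to verify each of the four defining conditions of Definition~\ref{def: NI definition} for $G(s)^T$, assuming they hold for $G(s)$, and then invoke symmetry to obtain the converse. The underlying facts I would use are that transposition commutes with evaluation at $s = j\omega$, i.e. $G(j\omega)^T$ is the transpose of $G(j\omega)$, that a square matrix $M$ and its transpose $M^T$ have the same eigenvalues (hence $G(s)$ and $G(s)^T$ have exactly the same poles with the same multiplicities, since the poles are determined by the minimal realization whose $A$-matrix transposes), and that for a real-rational $G$ one has $\overline{G(j\omega)} = G(-j\omega)$.

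First I would handle condition~1: the poles of $G(s)^T$ coincide with those of $G(s)$ (same characteristic polynomial of the state matrix after the transpose realization), so $G(s)^T$ has no pole in $\operatorname{Re}[s] > 0$. Next, for condition~2, fix $\omega \ge 0$ with $j\omega$ not a pole. Write $H = G(j\omega)$, so $G(j\omega)^T = H^T$. I need $j\bigl(H^T - (H^T)^*\bigr) \ge 0$. Since $(H^T)^* = \overline{H^T} = \overline{H}^{\,T} = (H^*)^T$, the matrix $j(H^T - (H^*)^T) = j(H - H^*)^T = \bigl(j(H - H^*)\bigr)^T$, which is the transpose of a positive semidefinite Hermitian matrix and therefore itself positive semidefinite Hermitian. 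This is the crux of the argument; everything else is bookkeeping. For condition~3, at a pole $j\omega_0$ with $\omega_0 > 0$: simplicity of the pole is a statement about the multiplicity in the minimal realization and is transpose-invariant; the residue matrix of $G(s)^T$ is $\lim_{s\to j\omega_0}(s - j\omega_0) j G(s)^T = \bigl(\lim_{s\to j\omega_0}(s - j\omega_0) j G(s)\bigr)^T = K^T$, again the transpose of a positive semidefinite Hermitian matrix, hence positive semidefinite Hermitian. Condition~4 is identical in spirit: the order of the pole at $s = 0$ is transpose-invariant, and $\lim_{s\to 0} s^2 G(s)^T = \bigl(\lim_{s\to 0} s^2 G(s)\bigr)^T \ge 0$ since the transpose of a (real) positive semidefinite matrix is positive semidefinite.

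Having shown $G(s)$ NI $\implies$ $G(s)^T$ NI, the reverse implication follows by applying the same argument to $G(s)^T$ and using $(G(s)^T)^T = G(s)$. The SNI case is handled by the same computations, replacing ``$\ge 0$'' with ``$> 0$'' in conditions~1 and 2 of Definition~\ref{def: SNI definition} and noting that strict positive definiteness of a Hermitian matrix is also preserved under transposition.

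The main obstacle, such as it is, is being careful with the interplay of complex conjugation and transposition: the NI conditions are phrased using the conjugate transpose $G(j\omega)^*$, while the lemma concerns the plain transpose $G(s)^T$ of the \emph{system}. The key identity to get right is $(H^T)^* = (H^*)^T$ for any complex matrix $H$, together with $j(H - H^*)^T = \bigl(j(H-H^*)\bigr)^T$ (the scalar $j$ is real-linear here in the sense that transposition does not conjugate it), so that Hermitian positive semidefiniteness transfers cleanly. Once that identity is in hand, no genuine difficulty remains.
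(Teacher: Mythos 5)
Your proof is correct. Note that the paper itself gives no proof of this lemma --- it is imported verbatim from the cited reference --- so there is no in-paper argument to compare against; your condition-by-condition verification against Definition~\ref{def: NI definition} (and Definition~\ref{def: SNI definition} for the strict case) is the standard and natural argument. The one point that needed care, the interaction of transposition with conjugate transposition, you handle correctly: $(H^T)^* = (H^*)^T$ (both equal $\overline{H}$), so $j\bigl(G(j\omega)^T - (G(j\omega)^T)^*\bigr) = \bigl(j(G(j\omega)-G(j\omega)^*)\bigr)^T$, and the transpose of a Hermitian positive (semi)definite matrix is its entrywise conjugate, hence again Hermitian positive (semi)definite; the same observation disposes of the residue condition and the double-pole condition, while pole locations and multiplicities are invariant under transposition since a minimal realization of $G(s)^T$ is obtained by transposing one of $G(s)$.
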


\begin{lemma}\label{lemma: NI ARE lemma} [ARE Negative Imaginary Lemma] \cite{Mabrok2015} \\
Let $
\begin{bmatrix}
    \begin{tabular}{ l | r }
  $A$ & $B$ \\ \hline
  $C$ & $D$
\end{tabular}
\end{bmatrix}
$ be a minimal realization of a real, rational transfer function matrix $G(s)$ and suppose $CB + B^TC^T > 0$. Then the following statements are equivalent:
\begin{enumerate}
\item $G(s)$ is NI.
\item The ARE
\begin{align} \label{math: NI ARE}
	PA + A^TP + (CA-B^TP)^TR^{-1}(CA-B^TP) = 0
\end{align} has a positive semi-definite solution $P \geq 0$.
\item The ARE
\begin{align} \label{math: NI ARE DUAL}
	ZA^T + AZ + (B-ZA^TC^T)R^{-1}(B^T-CAZ) = 0
\end{align} has a positive semi-definite solution $Z \geq 0$.
\end{enumerate}
\end{lemma}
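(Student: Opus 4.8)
The plan is to convert the negative imaginary property of $G(s)$ into a positive real property via Lemma \ref{lemma: G NI implies sH PR}, and then apply the positive real (KYP) lemma to explicit state-space realizations. First I would note that $F(s) := s(G(s)-D) = sC(sI-A)^{-1}B = CA(sI-A)^{-1}B + CB$, so that $(A,B,CA,CB)$ is a realization of $F(s)$ whose direct feedthrough term has symmetric part $CB + B^TC^T = R > 0$ by hypothesis; this is exactly the non-degenerate case of the positive real lemma, and it is what makes $R^{-1}$ meaningful in \eqref{math: NI ARE}--\eqref{math: NI ARE DUAL}.

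For $(1)\Leftrightarrow(2)$: by Lemma \ref{lemma: G NI implies sH PR}, $G$ is NI iff $F$ is PR, and applying the positive real lemma to the realization $(A,B,CA,CB)$ shows that $F$ is PR iff there exists $P=P^T\ge 0$ solving $A^TP+PA+(PB-(CA)^T)R^{-1}(B^TP-CA)=0$, which is exactly \eqref{math: NI ARE}. The implication $(2)\Rightarrow(1)$ is the direct one: given $P\ge 0$ solving \eqref{math: NI ARE}, a Schur-complement rearrangement turns it into the passivity dissipation inequality for $F$ with storage function $x^TPx$, and since $(A,B)$ is controllable this certifies that $F$ is PR, hence $G$ is NI. For $(1)\Rightarrow(2)$ one instead constructs $P$ from the positive realness of $F$, for instance as the maximal positive semidefinite solution of \eqref{math: NI ARE} associated with the Hamiltonian matrix, or via a spectral factorization argument.

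For $(1)\Leftrightarrow(3)$: transposition preserves positive realness, so $F$ is PR iff $F(s)^T$ is PR; and $F(s)^T$ is realized by the transposed realization $(A^T,A^TC^T,B^T,B^TC^T)$ of $F(s)$, whose feedthrough again has symmetric part $R>0$. Running the positive real lemma on this realization produces $AZ+ZA^T+(ZA^TC^T-B)R^{-1}(CAZ-B^T)=0$, i.e.\ \eqref{math: NI ARE DUAL}, which together with Lemma \ref{lemma: G NI implies sH PR} gives $(1)\Leftrightarrow(3)$. One could instead route this through $G(s)^T$ using Lemma \ref{lemma: Gs NI iff Gs^T NI definition}, or argue directly that \eqref{math: NI ARE} and \eqref{math: NI ARE DUAL} form a dual pair of algebraic Riccati equations, so that a positive semidefinite solution of one exists if and only if a positive semidefinite solution of the other does.

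The hard part is that $(A,B,CA,CB)$ need not be minimal: $(A,B)$ is controllable, but $(CA,A)$ is observable only when $A$ is nonsingular, and if $A$ is singular then $\ker A$ consists of controllable-but-unobservable modes sitting at the origin -- a situation that genuinely occurs under the hypotheses, for instance whenever $G$ has a pole at $s=0$. Consequently the textbook positive real lemma for minimal realizations does not apply verbatim in the direction $(1)\Rightarrow(2)$ (and likewise $(1)\Rightarrow(3)$), and one must use a version of the KYP lemma valid for merely controllable realizations, or perform a Kalman decomposition and treat the unobservable block at the origin separately; these origin modes, together with the imaginary-axis poles that an NI transfer function is permitted to have, are the delicate part of the bookkeeping. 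A minor additional point is that the KYP lemma most naturally yields a Riccati \emph{inequality}, which one then upgrades to the equality \eqref{math: NI ARE} via the standard monotone ordering of its solution set.
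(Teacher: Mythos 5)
Your proposal is correct in outline, but it does considerably more work than the paper, which disposes of this lemma in two sentences: the equivalence $(1)\Leftrightarrow(2)$ is simply cited from \cite{Mabrok2015}, and $(1)\Leftrightarrow(3)$ is obtained by applying that same cited argument to $G(s)^T$ and invoking Lemma~\ref{lemma: Gs NI iff Gs^T NI definition}. What you have written is essentially a reconstruction of the argument inside the cited reference: pass to $F(s)=s(G(s)-D)=CB+CA(sI-A)^{-1}B$ via Lemma~\ref{lemma: G NI implies sH PR} and run the non-degenerate positive real lemma on the realization $(A,B,CA,CB)$, whose feedthrough has symmetric part $R=CB+B^TC^T>0$; your handling of $(1)\Leftrightarrow(3)$ by transposing $F$ directly is equivalent to the paper's route through $G(s)^T$, since $F(s)^T=s\bigl(G(s)^T-D^T\bigr)$. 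The one place where your sketch stops short of a complete proof is exactly the place you flag yourself: when $A$ is singular (equivalently, when $G$ has a pole at the origin, which the NI class permits), the pair $(CA,A)$ is not observable, so the minimal-realization KYP lemma does not apply verbatim in the direction $(1)\Rightarrow(2)$, and obtaining a positive semidefinite solution of the Riccati \emph{equation} (rather than inequality) in the presence of imaginary-axis poles requires the spectral-factorization or Kalman-decomposition argument you only gesture at. Since you identify this difficulty correctly and it is precisely the content of the result the paper cites rather than proves, I would not count it as a gap in your reasoning, but a self-contained writeup would need to supply that step explicitly.
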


\begin{lemma} \label{lemma: SNI ARE lemma} [ARE Strictly Negative Imaginary Lemma] \cite{arXiv:1807.07212}
Let $
\begin{bmatrix}
    \begin{tabular}{ l | r }
  $A$ & $B$ \\ \hline
  $C$ & $D$
\end{tabular}
\end{bmatrix}
$ be a minimal realization of a real, rational transfer function matrix $G(s)$ and suppose $CB + B^TC^T > 0$. Then the following statements are equivalent:
\begin{enumerate}
\item $G(s)$ is SNI and $A$ has no imaginary-axis eigenvalues.
\item The ARE
\begin{align} \label{math: SNI ARE}
	PA + A^TP + (CA-B^TP)^TR^{-1}(CA-B^TP) = 0
\end{align} has a positive definite solution $P>0$ and all the eigenvalues of the matrix $A-BR^{-1}C(A-B^TP)$ lie in the open left half of the complex plane or at the origin.
\item The ARE
\begin{align} \label{math: SNI ARE DUAL}
	ZA^T + AZ + (B-ZA^TC^T)R^{-1}(B^T-CAZ) = 0
\end{align} has a positive definite solution $Z>0$ all the eigenvalues of the matrix $A-(B-YA^TC^T)R^{-1}CA$ lie in the open left half of the complex plane or at the origin.
\end{enumerate}
\end{lemma}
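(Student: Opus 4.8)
The plan is to build on the Negative Imaginary ARE Lemma (Lemma~\ref{lemma: NI ARE lemma}) and on the positive-real reformulation of Lemma~\ref{lemma: G NI implies sH PR}: the former already supplies the equivalence ``$G(s)$ NI $\iff$ \eqref{math: SNI ARE} has a solution $P\ge0$'', while the latter identifies \eqref{math: SNI ARE} as exactly the positive-real ARE of $F(s)=s(G(s)-D)$, which has the realization $(A,B,CA,CB)$ and feedthrough $R=CB+B^TC^T>0$. Relative to Lemma~\ref{lemma: NI ARE lemma}, three things must be added: promoting $P\ge0$ to $P>0$; pinning down the spectrum of the closed-loop matrix $A-BR^{-1}(CA-B^TP)$; and relating both to the \emph{strict} phase inequality together with the absence of imaginary-axis eigenvalues of $A$. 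I would prove $(1)\Leftrightarrow(2)$ in full; $(1)\Leftrightarrow(3)$ then follows by the symmetry of the positive-real ARE theory under transposition (via Lemma~\ref{lemma: Gs NI iff Gs^T NI definition}), or, when $P$ is nonsingular, via the substitution $Z=P^{-1}$, which turns \eqref{math: SNI ARE} into \eqref{math: SNI ARE DUAL} and carries the closed-loop condition along with it.

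For $(1)\Rightarrow(2)$: since SNI implies NI, Lemma~\ref{lemma: NI ARE lemma} gives a solution $P\ge0$ of \eqref{math: SNI ARE}. To see $P>0$, take $x\in\ker P$; multiplying \eqref{math: SNI ARE} on the left by $x^T$ and on the right by $x$ annihilates the $PA+A^TP$ term and forces $(CA-B^TP)x=0$, hence $CAx=0$, and reinserting this into \eqref{math: SNI ARE} gives $PAx=0$, so $\ker P$ is $A$-invariant and contained in $\ker(CA)$. Because statement~(1) rules out $s=0$ as an eigenvalue of $A$, the matrix $A$ is invertible, hence $A^{-1}(\ker P)\subseteq\ker P$ as well, and running $CAx=0$ along $A^{-1}$ gives $CA^{k}x=0$ for every $k\ge0$; minimality then forces $\ker P=\{0\}$, i.e.\ $P>0$. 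For the closed-loop spectrum I would use that, $A$ being invertible, $(A,B,CA,CB)$ is a minimal realization of $F(s)$, that \eqref{math: SNI ARE} is its positive-real ARE, and that, using $D=D^T$ for NI systems, the Popov function satisfies $F(j\omega)+F(j\omega)^*=\omega\,[\,j(G(j\omega)-G(j\omega)^*)\,]$ for $\omega>0$; by the SNI definition this is strictly positive for $\omega>0$, it equals $R>0$ at $\omega=\infty$, and it vanishes only at $\omega=0$ (since $F(0)=0$). Hence the Hamiltonian matrix of \eqref{math: SNI ARE} has no imaginary-axis eigenvalues other than at the origin, so its stabilizing solution exists, and for this $P$ the eigenvalues of $A-BR^{-1}(CA-B^TP)$ lie in the open left half-plane or at the origin. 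This last point is the step I expect to be the main obstacle: because the Popov function of $F(s)$ is forced to vanish at $\omega=0$, the associated Hamiltonian always carries imaginary-axis eigenvalues there, so the textbook statement ``no imaginary Hamiltonian eigenvalues $\Rightarrow$ a stabilizing solution with Hurwitz closed loop'' does not apply verbatim; one has to show these origin eigenvalues are structurally benign --- the relevant control being the local behaviour of $G(s)$ at $s=0$, which for NI systems is at worst a double pole --- so that \eqref{math: SNI ARE} still admits the $P$ required in~(2), with closed-loop spectrum in the open left half-plane plus, possibly, the origin. A perturbation/limiting argument in the spirit of \cite{Mabrok2015,arXiv:1807.07212} is the natural device here.

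For $(2)\Rightarrow(1)$: a $P>0$ solving \eqref{math: SNI ARE} is in particular a solution with $P\ge0$, so $G(s)$ is NI by Lemma~\ref{lemma: NI ARE lemma}. Next I would show $A$ has no imaginary-axis eigenvalue: if $Av=j\omega v$ with $v\ne0$, then $v^*(PA+A^TP)v=0$ (the eigenvalue $j\omega$ being purely imaginary), so \eqref{math: SNI ARE} forces $(CA-B^TP)v=0$ and hence $(A-BR^{-1}(CA-B^TP))v=j\omega v$; the spectral condition in~(2) then forces $\omega=0$, but for $Av=0$ one obtains $A^TPv=0$ and $B^TPv=0$, which by controllability of $(A,B)$ gives $Pv=0$, hence $v=0$ since $P>0$, a contradiction. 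Since in addition $P>0$ and $PA+A^TP\le0$ (from \eqref{math: SNI ARE}), every eigenvalue of $A$ has non-positive real part, and therefore, having no imaginary ones, $A$ is Hurwitz; so $G(s)$ has no poles in $\operatorname{Re}[s]\ge0$. Finally, the spectral factorization $F(j\omega)+F(j\omega)^*=M(j\omega)^*RM(j\omega)$ read off \eqref{math: SNI ARE} --- with $M$ having state matrix $A$, hence stable, and $M^{-1}$ having state matrix $A-BR^{-1}(CA-B^TP)$, so that $M(j\omega)$ is nonsingular for every $\omega\ne0$ by the spectral condition in~(2) --- yields $F(j\omega)+F(j\omega)^*>0$ for all $\omega\ne0$, i.e.\ $j(G(j\omega)-G(j\omega)^*)>0$ for $\omega>0$, which is the remaining SNI condition.
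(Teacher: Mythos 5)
The paper does not actually prove this lemma in any detail: its appendix proof consists of citing \cite{Mabrok2015} for the equivalence 1)\,$\iff$\,2) and invoking transposition via Lemma~\ref{lemma: Gs NI iff Gs^T NI definition} for 1)\,$\iff$\,3). You are therefore attempting something the paper outsources, and much of what you write is sound: the promotion of $P\ge0$ to $P>0$ (showing $\ker P$ is $A$-invariant and contained in $\ker(CA)$, then using invertibility of $A$ and observability) is correct, and your entire $2)\Rightarrow1)$ direction --- the eigenvector argument excluding imaginary-axis eigenvalues of $A$, the Lyapunov argument making $A$ Hurwitz, and the spectral factorization $F+F^{\sim}=M^{\sim}RM$ yielding the strict frequency-domain inequality for $\omega>0$ --- is complete in outline and uses only standard facts.

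The genuine gap is exactly where you locate it, and locating it does not close it. In $1)\Rightarrow2)$ you must exhibit a solution $P$ of (\ref{math: SNI ARE}) for which $A-BR^{-1}(CA-B^TP)$ has spectrum in the open left half-plane union the origin. Since $F(s)=s(G(s)-D)$ vanishes at $s=0$, the Popov function is singular at $\omega=0$ and the associated Hamiltonian always has eigenvalues at the origin, so the textbook existence theorem for a stabilizing solution does not apply, and the solution supplied by Lemma~\ref{lemma: NI ARE lemma} comes with no spectral guarantee whatsoever. The ``perturbation/limiting argument'' you defer to is precisely the nontrivial content of \cite{Mabrok2015,arXiv:1807.07212}: one must perturb (e.g.\ replace $A$ by $A\pm\epsilon I$), solve the now-regular ARE, and control the limit $\epsilon\to0$ so that the limiting $P$ exists, solves (\ref{math: SNI ARE}), remains positive definite, and retains the closed-loop spectral property. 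Without executing that step the hard implication is unproved. A secondary caution: your claim that the substitution $Z=P^{-1}$ ``carries the closed-loop condition along with it'' for $1)\iff3)$ is not automatic --- using (\ref{math: SNI ARE}) one finds that $A-(B-ZA^TC^T)R^{-1}CA$ is similar to the \emph{negative transpose} of $A-BR^{-1}(CA-B^TP)$, so the spectral condition does not transfer verbatim and must be reconciled carefully with the precise dual statement (which, as printed here with the stray $Y$, is itself suspect).
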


\begin{lemma}~\label{lemma: Z equals P inverse}
If a matrix matrix $P>0$ solves the ARE (\ref{math: SNI ARE}), then the matrix $Z = P^{-1}$ is a solution to the ARE
\begin{align*}
 Z{A_0}^T + A_0Z + Z\bar{Q}Z + BR^{-1}B^T &= 0,
\end{align*}
where
\begin{IEEEeqnarray*}{c}
    A_0 = A-BR^{-1}CA, \\
    R = CB + B^TC^T, \\
    \bar{Q} = A^TC^TR^{-1}CA.
  \end{IEEEeqnarray*}
\end{lemma}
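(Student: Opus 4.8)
The plan is to start from the ARE (\ref{math: SNI ARE}) satisfied by $P>0$, rewrite it in terms of the abbreviations $A_0 = A-BR^{-1}CA$, $R = CB+B^TC^T$ and $\bar Q = A^TC^TR^{-1}CA$, and then conjugate the whole identity by $P^{-1}$ on both sides. First I would expand the quadratic term in (\ref{math: SNI ARE}): writing $(CA-B^TP)^TR^{-1}(CA-B^TP) = A^TC^TR^{-1}CA - A^TC^TR^{-1}B^TP - PBR^{-1}CA + PBR^{-1}B^TP$. Substituting this back, the ARE becomes
\begin{IEEEeqnarray*}{c}
P(A-BR^{-1}CA) + (A-BR^{-1}CA)^TP + A^TC^TR^{-1}CA + PBR^{-1}B^TP = 0,
\end{IEEEeqnarray*}
that is, $PA_0 + A_0^TP + \bar Q + PBR^{-1}B^TP = 0$. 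So the bulk of the work is this regrouping, which is routine once one notices that $R^{-1}$ is symmetric (since $R = CB+B^TC^T$ is symmetric and, by hypothesis of the ambient lemmas, positive definite, hence invertible).

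Next I would multiply this rearranged equation on the left and on the right by $Z = P^{-1}$ (which exists and is positive definite because $P>0$). Using $ZP = PZ = I$, the four terms transform as follows: $Z(PA_0)Z = A_0 Z$; $Z(A_0^TP)Z = Z A_0^T$; $Z\bar Q Z$ stays as is; and $Z(PBR^{-1}B^TP)Z = BR^{-1}B^T$. Summing gives exactly
\begin{IEEEeqnarray*}{c}
A_0 Z + Z A_0^T + Z\bar Q Z + BR^{-1}B^T = 0,
\end{IEEEeqnarray*}
which is the claimed ARE. This step is purely algebraic symmetry-pushing and carries no real obstruction.

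Honestly, there is no genuinely hard part here: the lemma is a change-of-variables identity $Z=P^{-1}$ of the standard "dual Riccati" type, and the only things to be careful about are (i) that $R$ is invertible and symmetric so that $R^{-1}$ commutes through transposes correctly, and (ii) that $P$ is invertible, both of which are guaranteed by the standing hypotheses ($CB+B^TC^T>0$ and $P>0$ from Lemma \ref{lemma: SNI ARE lemma}). If anything, the one point worth double-checking in the write-up is the bookkeeping of the cross terms $A^TC^TR^{-1}B^TP$ and $PBR^{-1}CA$ when expanding the quadratic form — these must cancel against the corresponding pieces produced by $PBR^{-1}CA$ inside $PA_0$ and $A_0^TP$, leaving precisely $\bar Q$ and the $PBR^{-1}B^TP$ term; I would verify that cancellation explicitly before conjugating by $P^{-1}$.
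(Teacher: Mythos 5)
Your proposal is correct and follows essentially the same route as the paper: both proofs pre- and post-multiply the ARE (\ref{math: SNI ARE}) by $Z=P^{-1}$, using the symmetry and invertibility of $R$ and the invertibility of $P$. If anything, your version is slightly more explicit, since you first regroup the ARE into the form $PA_0+A_0^TP+\bar{Q}+PBR^{-1}B^TP=0$ and land exactly on the equation stated in the lemma, whereas the paper writes the conjugated identity in the equivalent factored form $ZA^T+AZ+(B-ZA^TC^T)R^{-1}(B^T-CAZ)=0$ and leaves the final regrouping implicit.
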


\begin{definition} \cite{zhou1996robust}
Consider the real, rational, LTI system (\ref{system}) with transfer function $G(s)$. The operation of the system by
\begin{align*}
\dot{x} = Ax + Bu \longmapsto \dot{x} = Ax + Bu + Ly
\end{align*}
is called output injection and can be written as
\begin{align*}
\begin{bmatrix}
    \begin{tabular}{ l | r }
  $A$ & $B$ \\ \hline
  $C$ & $D$
\end{tabular}
\end{bmatrix} \mapsto
\begin{bmatrix}
    \begin{tabular}{ c | c }
  $A + LC$ & $B + LD$ \\ \hline
  $C$ & $D$
\end{tabular}
\end{bmatrix}.
\end{align*}
\end{definition}
Output injection does not change the detectability of a system \cite{zhou1996robust}.

Before the output feedback control problem can be addressed, the state feedback control problem for negative imaginary systems must be discussed.

\section{static state feedback}

Consider the state space representation of a linear uncertain system described by
\begin{align} \label{uncertain system}
  \dot{x} &= Ax + B_1w + B_2u \\
  z       &= C_1x \label{uncertain systemb} \\
  y       &= C_2x + D_{21}w   \label{uncertain systemc}
\end{align}
This system has uncertainty $\Delta(s)$ with  state space representation:
\begin{align} \label{uncertainty model}
  \dot{x}_{\Delta} &= A_{\Delta}x_{\Delta} + B_{\Delta}z \\
  w       &= C_{\Delta}x_{\Delta} + D_{\Delta}z \label{uncertainty modelc}
\end{align}
Also, assume that all of this system's state-variables are available for feedback.
\\\\
The state feedback control problem for negative imaginary systems is then, under the control scheme $u=Kx$, can we design a controller $K$ such that the corresponding closed-loop uncertain system
\begin{align} \label{math: closed loop system}
  \dot{x} &= (A + B_2K)x + B_1w \\
  z       &= C_1x        \label{math: closed loop systemb}     \\
  \dot{x}_{\Delta} &= A_{\Delta}x_{\Delta} + B_{\Delta}z \\
  w       &= C_{\Delta}x_{\Delta} + D_{\Delta}z
\end{align}
has the NI property.
\\\\
The following lemma is one solution to this problem.

\begin{lemma}\label{lemma: Dannatt/petersen synthesis lemma} [NI State Feedback Control Lemma] \cite{Mabrok2012,Mabrok2015,arXiv:1807.07212} Consider the uncertain system (\ref{uncertain system})-(\ref{uncertain systemb}) that satisfies $C_1B_2$ non-singular and $C_1B_1 + B_1^TC_1^T > 0$. The following statements are equivalent:
\begin{enumerate}
\item There exists a static state feedback matrix $K$ such that the closed-loop system (\ref{math: closed loop system})-(\ref{math: closed loop systemb}) is NI.
\item There exists matrices $T \geq 0$ and $S \geq 0$ such that
\begin{align}
  -A_{22}T - TA_{22}^T + B_{f2}RB_{f2}^T &= 0, \label{math: T cond}\\
  -A_{22}S - SA_{22}^T + B_{22}R^{-1}B_{22}^T &= 0, \label{math: S cond}\\
  T-S &> 0,
\end{align}
where the matrices $A_{22}$, $B_{f2}$ and $B_{22}$ are obtained from the Schur decomposition outlined below in (\ref{math: schur decomp s})-(\ref{math: schur decomp e}).
\item There exists a positive semi-definite matrix $P_f\geq0$ that satisfies the ARE
\begin{align*}
P_fA_f + A_f^TP_f +P_f(\tilde{B_1}R^{-1}\tilde{B_1}^T-B_fRB_f^T)P_f = 0,
\end{align*}
where $A_f$,$B_f$ and $\tilde{B_1}$ are defined below in (\ref{math: schur decomp s})-(\ref{math: schur decomp e}).
\end{enumerate}
If any of the above statements hold, then a corresponding state feedback controller matrix $K$ is given by,
\begin{align*}
K &= (C_1B_2)^{-1}(B_1^TP - C_1A - R(B_2^TC_1^T)^{-1}B_2^TP),
\end{align*}
where $P = UP_fU^T$ and $P_f = \begin{bmatrix}
    \begin{tabular}{ l  r }
  $0$ & $0$ \\
  $0$ & $(T-S)^{-1}$
\end{tabular}
\end{bmatrix} \geq 0$.
$U$ is an orthogonal matrix obtained through the following real Schur transformation~\footnote{See Section 5.4 of \cite{Bernstein2009}}.
\end{lemma}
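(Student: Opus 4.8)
\emph{Proof sketch.} The plan is to move back and forth between the closed-loop negative imaginary property and a matrix algebraic Riccati equation, and then to reduce the Riccati condition to the Lyapunov pair of statement~2 via a coordinate change adapted to the output matrix. First I would note that statement~1 asks precisely that the nominal closed loop $G_{cl}(s) = C_1(sI-A-B_2K)^{-1}B_1$, i.e. the map from $w$ to $z$ in (\ref{math: closed loop system})--(\ref{math: closed loop systemb}), be NI; this is the property that feeds the robust-stability argument of \cite{Lanzon2008} once the SNI uncertainty is reconnected. Passing to a minimal realization of $G_{cl}$ (the hypotheses ensure the part lost in the reduction does not affect the NI test), Lemma~\ref{lemma: NI ARE lemma} applies since the closed-loop realization has $C_1B_1 + B_1^TC_1^T = R > 0$; hence $G_{cl}$ is NI if and only if the closed-loop NI ARE
\begin{align*}
P(A+B_2K) + (A+B_2K)^TP + \bigl(C_1(A+B_2K) - B_1^TP\bigr)^TR^{-1}\bigl(C_1(A+B_2K) - B_1^TP\bigr) = 0
\end{align*}
has a solution $P \geq 0$. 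This disposes of the $K$-independent content of the statement.

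Next I would use that $C_1B_2$ is nonsingular. For a fixed candidate $P$ the residual $C_1(A+B_2K) - B_1^TP$ can be assigned any value by the choice of $K$; the formula in the lemma is the one making this residual equal to $-R(B_2^TC_1^T)^{-1}B_2^TP$. Substituting that $K$ into the closed-loop NI ARE and simplifying — the projector $\Pi := B_2(C_1B_2)^{-1}C_1$ is idempotent, $C_1(I-\Pi)=0$, and $R = C_1B_1 + B_1^TC_1^T$ — collapses it to a Riccati equation whose only unknown is $P$ and whose dynamics matrix $(I-\Pi)A$ is annihilated on the left by $C_1$. Applying the real Schur transformation $x = U\tilde{x}$ of \cite{Bernstein2009} prepared in (\ref{math: schur decomp s})--(\ref{math: schur decomp e}), with $U$ orthogonal adapted to $\ker C_1$ so that $C_1U$ is block structured and the appropriate sub-block is quasi-triangular, decouples this equation: one block of $\tilde{P} = U^TPU$ is determined outright and the complementary block solves the reduced ARE of statement~3 with data $A_f,B_f,\tilde{B_1}$. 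Writing $P = UP_fU^T$ with $P_f = \begin{bmatrix} 0 & 0 \\ 0 & (T-S)^{-1}\end{bmatrix}$ re-expresses this, and $P \geq 0$ becomes $P_f \geq 0$, i.e. $T - S > 0$.

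It then remains to identify statement~3 with statement~2. Here I would exploit that, on the subspace where $P_f$ is nonzero, the reduced ARE has the special structure in which the stabilizing solution is obtained not by factoring a (possibly singular) Hamiltonian but as the difference of the solutions of two dual Lyapunov equations — the same device as in Lemma~\ref{lemma: Z equals P inverse}, now localized to the $A_{22}$ block. Concretely, set $X = T - S$; the Lyapunov equations (\ref{math: T cond})--(\ref{math: S cond}) for $T$ and $S$ (the $R$ versus $R^{-1}$ weighting reflecting the primal/dual split) give, on subtraction, exactly the $A_{22}$-block identity that $X$ must satisfy for $P_f = \begin{bmatrix} 0 & 0 \\ 0 & X^{-1}\end{bmatrix}$ to solve the reduced ARE, and conversely any $P_f \geq 0$ solving the reduced ARE yields $T$ and $S$ with $T - S = X > 0$. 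Running the reductions backwards and substituting $P = UP_fU^T \geq 0$ into the closed-loop NI ARE verifies that the $K$ of the lemma does render $G_{cl}$ NI, closing the cycle of equivalences and justifying the controller formula.

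The step I expect to be the main obstacle is the decoupling: showing that, after the Schur change of coordinates and the substitution of the $K$-formula, the closed-loop NI ARE genuinely splits into a trivially satisfied block plus the reduced ARE of statement~3, and keeping track of which blocks of $\tilde{P}$ are free and which are forced. The identification (2)$\Leftrightarrow$(3) — reading the reduced ARE as a difference of two Lyapunov equations and checking that $T-S>0$ corresponds to $P_f \geq 0$ — is the other delicate point; the minimality argument for $G_{cl}$ in the first step is comparatively routine but should not be skipped.
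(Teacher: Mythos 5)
The paper does not prove this lemma: it is imported verbatim from the cited references \cite{Mabrok2012,Mabrok2015,arXiv:1807.07212}, so there is no in-paper argument to compare against and your proposal has to stand on its own. The sufficiency half of your plan is sound and matches the standard route: with $K=(C_1B_2)^{-1}(B_1^TP-C_1A-R(B_2^TC_1^T)^{-1}B_2^TP)$ the residual $C_1(A+B_2K)-B_1^TP$ becomes $-R(B_2^TC_1^T)^{-1}B_2^TP$, the closed-loop NI ARE collapses to $PA_0+A_0^TP+P\Gamma P=0$ with $A_0=A-B_2(C_1B_2)^{-1}C_1A$ and $\Gamma=B_2(C_1B_2)^{-1}B_1^T+B_1(C_1B_2)^{-T}B_2^T-B_2(C_1B_2)^{-1}R(C_1B_2)^{-T}B_2^T$, and conjugation by $U$ shows this is exactly the ARE of statement~3; the $(2,2)$ block with $P_f=\mathrm{diag}(0,(T-S)^{-1})$ reproduces the difference of the two Lyapunov equations. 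That algebra checks out.

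The genuine gap is in necessity, i.e.\ $1)\Rightarrow 3)$. You argue only that the residual \emph{can} be assigned by $K$ and then substitute the particular $K$ of the lemma; but statement~1 supposes an \emph{arbitrary} $K$ rendering the closed loop NI, and for that $K$ the closed-loop ARE, after completing the square, reads $PA_0+A_0^TP+P\Gamma P+M^TR^{-1}M=0$ with $M=C_1(A+B_2K)-B_1^TP+R(C_1B_2)^{-T}B_2^TP$ generally nonzero. Since $M^TR^{-1}M\geq 0$ this only yields the Riccati \emph{inequality} $PA_0+A_0^TP+P\Gamma P\leq 0$, and passing from a nonnegative solution of the inequality to a nonnegative solution of the equality requires a comparison/monotonicity argument — this is precisely where the Schur splitting and the anti-stability of $A_{22}$ do their work in the cited proofs, and it is absent from your sketch. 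A second, smaller omission of the same kind: for $3)\Rightarrow 2)$ you assert that \emph{any} $P_f\geq 0$ solving the reduced ARE yields $T-S>0$, but since $A_{22}$ is anti-stable the Lyapunov solutions $T,S$ are unique, so statement~2 is really the single condition $T-S>0$ and you must explain why an unstructured solution $P_f\geq 0$ forces it (and, dually, why the $(1,1)$ block can be taken to vanish). Finally, the appeal to Lemma~\ref{lemma: NI ARE lemma} needs the closed-loop realization $(A+B_2K,B_1,C_1)$ to be minimal, which you acknowledge but do not actually secure from the hypotheses; this must be handled, not waved away.
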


\subsection{Schur decomposition}

Consider the following real Schur transformation of the matrix $A-B_2(C_1B_2)^{-1}C_1A$ which we can apply to the system (\ref{math: closed loop system}), (\ref{math: closed loop systemb}) to give

\begin{align} \label{math: schur decomp s}
  A_f &= U^T(A-B_2(C_1B_2)^{-1}C_1A)U   =
  \begin{bmatrix}
    \begin{tabular}{ l  r }
  $A_{11}$ & $A_{12}$ \\
  0 & $A_{22}$
\end{tabular}
\end{bmatrix},
\\
  B_f &= U^T(B_2(C_1B_2)^{-1} - B_1R^{-1}) =
  \begin{bmatrix}
    \begin{tabular}{c}
      $B_{f1}$ \\
      $B_{f1}$
\end{tabular}
\end{bmatrix},\\
  \tilde{B_1} &= U^TB_1 =
  \begin{bmatrix}
    \begin{tabular}{c}
      $B_{11}$ \\
      $B_{22}$
\end{tabular}
\end{bmatrix}. \label{math: schur decomp e}
\end{align}
This transformation is constructed such that all of the eigenvalues of the matrix $A_{11}$ are in the closed left half plane and $A_{22}$ is an anti-stable matrix.

\begin{remark}
The sub-matrix $A_{11}$ will always contain a zero eigenvalue. This follows directly from that fact that $A-B_2(C_1B_2)^{-1}C_1A$ is singular.
\end{remark}

\begin{remark}
If the state feedback controller $K$ is applied to a system for which the plant uncertainty is known to be SNI and the DC gain condition of \cite{Lanzon2008} is satisfied, then the resulting positive feedback interconnection of the plant uncertainty with the closed-loop transfer function is guaranteed to be robustly stable.
\end{remark}

\begin{lemma}\label{lemma: dual output injection problem} [NI Dual Output Injection Lemma]
Consider the uncertain system (\ref{uncertain system})-(\ref{uncertain systemc}) that satisfies $D_{21}$ non-singular and $C_1B_1 + B_1^TC_1^T > 0$. The following statements are equivalent:
\begin{enumerate}
\item The closed-loop system (\ref{math: closed loop system})-(\ref{math: closed loop systemb}) is NI.
\item There exists an output injection matrix $L$ such that the system
\begin{align} \label{math: dual closed loop system}
  \dot{x}   &= (A + LC_2)^Tx + A^TC_1^Tw \\
  \tilde{z} &= (B_1^T + D_{21}^TL^T)x + B_1^TC_1^T \label{math: dual closed loop systemb}
\end{align}
is PR.
\item There exists a positive semi-definite matrix $Z\geq0$ that satisfies the ARE
\begin{align*}
Z(A + LC_2)^T + (A + LC_2)Z + Q^TR^{-1}Q = 0,
\end{align*}
where $Q = B_1^T+D_{21}^TL^T-C_1AZ$.
\end{enumerate}
If any of the above statements hold, a corresponding control matrix $L$ that solves this systems dual output injection problem is given by
\begin{align} \label{math: definition for L}
L = (ZA^TC_1^T - B_1 - ZC_2^T(D_{21})^{-1}R)(D_{21}^T)^{-1}.
\end{align}
\end{lemma}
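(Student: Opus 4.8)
The plan is to prove the two equivalences $1\Leftrightarrow 3$ and $3\Leftrightarrow 2$, treating this lemma as the transposed counterpart of the state feedback lemma under the correspondences $G(s)$ NI $\iff$ $G(s)^{T}$ NI (Lemma~\ref{lemma: Gs NI iff Gs^T NI definition}) and $G(s)$ NI $\iff$ $s(G(s)-D)$ PR (Lemma~\ref{lemma: G NI implies sH PR}). Throughout, set $R=C_{1}B_{1}+B_{1}^{T}C_{1}^{T}>0$.

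For $3\Leftrightarrow 2$: the system (\ref{math: dual closed loop system})--(\ref{math: dual closed loop systemb}) has the realization $\tilde A=(A+LC_{2})^{T}$, $\tilde B=A^{T}C_{1}^{T}$, $\tilde C=B_{1}^{T}+D_{21}^{T}L^{T}$, $\tilde D=B_{1}^{T}C_{1}^{T}$, with $\tilde D+\tilde D^{T}=R>0$. The positive real lemma in its algebraic Riccati form --- available because $\tilde D+\tilde D^{T}$ is nonsingular --- characterises positive realness of this system by the solvability, with $Z\geq 0$, of $\tilde A^{T}Z+Z\tilde A+(Z\tilde B-\tilde C^{T})(\tilde D+\tilde D^{T})^{-1}(\tilde B^{T}Z-\tilde C)=0$; substituting the realization and writing $Q=\tilde C-\tilde B^{T}Z=B_{1}^{T}+D_{21}^{T}L^{T}-C_{1}AZ$ turns this into precisely the ARE of statement~3. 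The one point requiring care is that (\ref{math: dual closed loop system}) need not be minimal, so one applies the positive real lemma to a minimal subsystem; the detectability needed for that is available because, as noted after the output-injection definition, output injection does not change detectability.

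For $1\Leftrightarrow 3$: apply Lemma~\ref{lemma: NI ARE lemma} to $G_{cl}(s)=C_{1}(sI-(A+B_{2}K))^{-1}B_{1}$, for which the corresponding $CB+B^{T}C^{T}$ equals $R>0$; NI of $G_{cl}$ is then equivalent to the existence of $Z\geq 0$ solving the dual ARE (\ref{math: NI ARE DUAL}) for $G_{cl}$, namely
\begin{align*}
Z(A+B_{2}K)^{T}+(A+B_{2}K)Z+\bigl(B_{1}-Z(A+B_{2}K)^{T}C_{1}^{T}\bigr)R^{-1}\bigl(B_{1}^{T}-C_{1}(A+B_{2}K)Z\bigr)=0 .
\end{align*}
It remains to show that, when $K$ is the state feedback gain supplied by Lemma~\ref{lemma: Dannatt/petersen synthesis lemma} and $L$ is obtained from $Z$ through (\ref{math: definition for L}), this equation is equivalent to the ARE of statement~3; conversely, from a solution $Z\geq 0$ of statement~3's ARE one defines $L$ by (\ref{math: definition for L}), runs the computation in reverse to recover a solution of the displayed closed-loop ARE, and concludes NI of the closed loop by Lemma~\ref{lemma: NI ARE lemma}. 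The same pair $(Z,L)$ supplies the controller $L$ asserted in the statement.

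The principal obstacle is this reconciliation of the two algebraic Riccati equations: one must verify that replacing $A+B_{2}K$ by $A+LC_{2}$ and $B_{1}$ by $B_{1}+LD_{21}$ leaves the Riccati equation unchanged. I expect this to come out by substituting the explicit $L$ of (\ref{math: definition for L}) together with the state feedback gain of Lemma~\ref{lemma: Dannatt/petersen synthesis lemma}, exploiting the block structure exposed by the Schur decomposition (\ref{math: schur decomp s})--(\ref{math: schur decomp e}) and the nonsingularity of $D_{21}$ to cancel the $R$- and $D_{21}$-dependent cross terms; an identity of the form $C_{1}A(A+LC_{2})^{-1}(B_{1}+LD_{21})=C_{1}B_{1}$, forced by the form of $L$, is the key to this cancellation. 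A secondary but unavoidable layer is the realization housekeeping --- minimality, and detectability (preserved under output injection) --- required to apply Lemma~\ref{lemma: NI ARE lemma} and the positive real lemma to the possibly non-minimal closed-loop and dual systems.
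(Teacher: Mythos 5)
Your handling of $2)\iff 3)$ is essentially the paper's: the system in statement 2) has feedthrough sum $R>0$, so the positive real lemma in its algebraic Riccati form (the paper cites Lemma 2.3 of \cite{sun1994solution}) gives exactly the ARE of statement 3). That part is sound.

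The gap is in your link to statement 1). The paper never compares the state-feedback Riccati equation with the output-injection one; it closes $1)\iff 2)$ at the transfer-function level. By Lemma~\ref{lemma: G NI implies sH PR}, the closed loop (\ref{math: closed loop system})--(\ref{math: closed loop systemb}) is NI if and only if $s(G(s)-D)$ is PR, and the Dual Positive Real Lemma of \cite{anderson1967dual} then converts positive realness of that system into positive realness of the dual system (\ref{math: dual closed loop system})--(\ref{math: dual closed loop systemb}). Your route instead applies Lemma~\ref{lemma: NI ARE lemma} to $C_{1}(sI-(A+B_{2}K))^{-1}B_{1}$ and then proposes to show that the resulting ARE, which involves $A+B_{2}K$ and $B_{1}$, coincides with the ARE of statement 3), which involves $A+LC_{2}$ and $B_{1}+LD_{21}$ and does not involve $K$ at all. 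You explicitly defer this ``reconciliation,'' and the identity you nominate as the key to it, $C_{1}A(A+LC_{2})^{-1}(B_{1}+LD_{21})=C_{1}B_{1}$, is neither derived nor obviously true (nor is $A+LC_{2}$ guaranteed invertible). As written, the central equivalence of the lemma is asserted rather than proved. If you want to keep an ARE-level argument you must actually carry out that computation; the cleaner fix is to follow the paper and invoke Lemma~\ref{lemma: G NI implies sH PR} together with the dual positive real lemma, which bypasses the Riccati comparison entirely.
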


We will now use both the state feedback and output injection lemmas in constructing a controller that satisfies the output feedback control problem.

\section{Negative imaginary dynamic output feedback} \label{sec: main_result}

Consider the state space representation of a linear uncertain system
\begin{align} \label{system: main a}
  \dot{x} &= Ax + B_1w + B_2u \\
  z       &= C_1x            \\
  y       &= C_2x + D_{21}w             \label{system: main c}
\end{align}
where $A \in \mathbb{R}^{n \times n}$, $B_1 \in \mathbb{R}^{n \times 1}$, $B_2 \in \mathbb{R}^{n \times r}$, $C_1 \in \mathbb{R}^{1 \times n}$, $C_2 \in \mathbb{R}^{1 \times n}$, $D_{21} \in \mathbb{R}^{n \times n}$. This system is assumed to have SNI uncertainty $\Delta(s)$ with state space representation (\ref{uncertainty model})-(\ref{uncertainty modelc}).
\\\\
Suppose the system (\ref{system: main a})-(\ref{system: main c}) satisfies the following assumptions:
\begin{enumerate}[{A}1.]
 \item $(A,B_2)$ is stabilizable and $(C_2,A)$ is detectable;
 \item $C_1B_2$ is non-singular;
 \item $D_{21}$ is non-singular;
 \item $R = C_1B_1 + B_1^TC_1^T > 0$. \label{Last Asummption}
\end{enumerate}

If we apply the dynamic compensator
\begin{align}
  \dot{x_k} &= A_kx_k + B_ky \label{controller: xdot}\\
  u      &= C_kx_k \label{controller: y}
\end{align}
to the system (\ref{system: main a})-(\ref{system: main c}) the corresponding closed-loop system has the realization
\begin{align} \label{closed loop realization x.}
\begin{bmatrix}
    \begin{tabular}{ c c | r }
  $A$ & $B_2C_k$ & $B_1$ \\
  $B_kC_2$ & $A_k$ & $B_kD_{21}$ \\ \hline
  $C_1$ & $0$ & $0$
\end{tabular}
\end{bmatrix},
\end{align}
with closed-loop transfer function
\begin{align} \label{math: closed-loop transfer function}
  G_{cl}(s) &= C_c\big(sI - A_c\big)^{-1}B_c.
\end{align}

We now present both necessary and sufficient conditions for the existence of a dynamic output controller that will result in a closed-loop system with the NI property.
%
\begin{theorem}~\label{theorem: NI outputfeedback sufficiency}
Consider the uncertain system (\ref{system: main a})-(\ref{system: main c}) satisfying assumptions A1-A\ref{Last Asummption}. Suppose there exist $P\geq0$, $Z\geq0$, $F$ and $L$ which satisfy:
\begin{enumerate}[(a)]
 \item
 \begin{IEEEeqnarray}{c}
  R(P) = P\tilde{A} + \tilde{A}^TP + \tilde{Q}^TR^{-1}\tilde{Q} = 0,
  \end{IEEEeqnarray}
where
\begin{align*}
    \tilde{A} &= A + B_2F , &
    \tilde{Q} &= C_1(A+B_2F)-B_1^TP.
  \end{align*}
  \item
 \begin{IEEEeqnarray}{c}
  S(Z) = Z\bar{A}^T + \bar{A}Z + \bar{Q}R^{-1}\bar{Q}^T = 0,
  \end{IEEEeqnarray}
  where
\begin{align*}
    \bar{A} &= A + LC_2, &
    \bar{Q} &= B_1+LD_{21}-ZA^TC_1^T.
  \end{align*}
\item $\rho (ZP) < 1$.
\end{enumerate}
Then, a strictly proper controller which results in a closed-loop system with the negative imaginary property is given by (\ref{controller: xdot}), (\ref{controller: y}) where
\begin{align}
    C_k &= F, \label{controller: C_k choice}\\
    B_k &= -(I-ZP)^{-1}L, \label{controller: B_k choice} \\
    A_k &= A+B_2C_k-B_kC_2-\big(B_1 + (I-ZP)^{-1}LD_{21}\big) \nonumber\\
    & \omit\hfill ${}  \cdot R^{-1}\big(C_1(A+B_2C_k)-B_1^TP\big).$ \nonumber\\
    \label{controller: A_k choice}
  \end{align}

Conversely, suppose there exists a controller of the form (\ref{controller: xdot}), (\ref{controller: y}) such that the closed-loop system is SNI and minimal. Then there exists $P>0$, $Z>0$, $F$ and $L$ which satisfy
\begin{enumerate}[(a)]
 \item
 \begin{IEEEeqnarray}{c}
  R(P) = P\tilde{A} + \tilde{A}^TP + \tilde{Q}^TR^{-1}\tilde{Q} = 0,
  \end{IEEEeqnarray}
where
\begin{align*}
    \tilde{A} &= A + B_2F , &
    \tilde{Q} &= C_1(A+B_2F)-B_1^TP.
  \end{align*}
  \item
 \begin{IEEEeqnarray}{c}
  S(Z) = Z\bar{A}^T + \bar{A}Z + \bar{Q}R^{-1}\bar{Q}^T = 0,
  \end{IEEEeqnarray}
  where
\begin{align*}
    \bar{A} &= A + LC_2, &
    \bar{Q} &= B_1+LD_{21}-ZA^TC_1^T.
  \end{align*}
\item $\rho (ZP) \leq 1$.
\end{enumerate}
\end{theorem}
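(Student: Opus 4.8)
The plan is to exploit the separation structure familiar from $H_\infty$ synthesis (cf.\ \cite{petersen1991first,sun1994solution}): condition (a) is exactly the negative imaginary ARE (\ref{math: NI ARE}) of Lemma~\ref{lemma: NI ARE lemma} for the state-feedback system $(A+B_2F,B_1,C_1)$ --- admissible because $C_1B_1+B_1^TC_1^T=R>0$ --- and condition (b) is precisely statement~3 of the dual output injection Lemma~\ref{lemma: dual output injection problem}. Thus $F$ certifies a state-feedback solution and $L$ certifies a dual output-injection solution, and the compensator (\ref{controller: C_k choice})--(\ref{controller: A_k choice}) glues the two together, with the factor $(I-ZP)^{-1}$ playing the role of the DGKF coupling term and $\rho(ZP)<1$ being exactly the condition that makes it well defined and keeps the glued certificate positive semidefinite.

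\textbf{Sufficiency.} First I would record that for the closed-loop realization (\ref{closed loop realization x.}) one has $C_cB_c+B_c^TC_c^T = C_1B_1+B_1^TC_1^T = R > 0$ (the controller rows of $C_c$ vanish), so Lemma~\ref{lemma: NI ARE lemma} --- equivalently Lemma~\ref{lemma: G NI implies sH PR} together with the positive real lemma --- is available for $G_{cl}$, and it is enough to produce a positive semidefinite solution $\mathcal{P}$ of the closed-loop ARE
\[
\mathcal{P}A_c+A_c^T\mathcal{P}+(C_cA_c-B_c^T\mathcal{P})^T R^{-1}(C_cA_c-B_c^T\mathcal{P})=0 .
\]
The candidate I would try is the $2\times 2$ block matrix assembled from $P$, $Z$ and $(I-ZP)^{-1}$ in the usual DGKF pattern (a congruence of $\operatorname{diag}\big(P,\,(I-PZ)^{-1}P\big)$ by an invertible change of coordinates in which the controller data (\ref{controller: A_k choice}) is engineered precisely so that the cross terms collapse); positive semidefiniteness of this candidate is exactly $\rho(ZP)<1$. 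Substituting it into the closed-loop ARE and simplifying --- the bulk of the work --- I expect the $(1,1)$ part to reduce to $R(P)=0$, the error part to reduce, after a congruence by $P$ and the $Z=P^{-1}$ correspondence of Lemma~\ref{lemma: Z equals P inverse}, to $S(Z)=0$, and the cross terms to cancel by construction of $A_k$. A minimal realization of $G_{cl}$ then meets the hypotheses of Lemma~\ref{lemma: NI ARE lemma}, so $G_{cl}$ is NI; since the closed loop is strictly proper, $D_{cl}=0$ and the NI definition is satisfied.

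\textbf{Necessity.} For the converse, assume some compensator of the form (\ref{controller: xdot})--(\ref{controller: y}) makes the closed loop SNI and minimal. Then $A_c$ is Hurwitz, hence has no imaginary-axis eigenvalues, and Lemma~\ref{lemma: SNI ARE lemma} yields a positive definite $\mathcal{P}>0$ solving the closed-loop ARE; its inverse $\mathcal{Z}=\mathcal{P}^{-1}>0$ solves the dual closed-loop ARE by Lemma~\ref{lemma: Z equals P inverse}. I would then take $P$ to be the Schur complement of $\mathcal{P}$ with respect to its controller block and $Z$ to be the Schur complement of $\mathcal{Z}$ with respect to its controller block (equivalently, the inverse of the $(1,1)$-block of $\mathcal{P}$), and read $F$ and $L$ off the controller gains $C_k$, $B_k$ through the same projections. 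Extracting the $(1,1)$ block of the closed-loop ARE and completing the square should show that $P>0$ and $F$ satisfy (a); the same argument applied to the dual closed-loop ARE gives $Z>0$ and $L$ satisfying (b). Finally, since the Schur complement $P$ is dominated by the $(1,1)$-block $\mathcal{P}_{11}$ while $Z=\mathcal{P}_{11}^{-1}$, we obtain $P\le Z^{-1}$, which is exactly $\rho(ZP)\le 1$.

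\textbf{Anticipated obstacle.} The main difficulty is the sufficiency computation: pinning down the precise block form of the closed-loop certificate $\mathcal{P}$ --- the $(2,2)$ block and the exact placement of $(I-ZP)^{-1}$ versus $(I-PZ)^{-1}$, together with the sign conventions --- and then verifying, without invoking minimality prematurely, that it genuinely satisfies the closed-loop ARE once (a) and (b) are used. A secondary, more bookkeeping issue is the asymmetry between the two halves (NI versus SNI, strict versus non-strict), which forces the converse to assume minimality and an SNI, hence Hurwitz, closed loop in order to invoke Lemma~\ref{lemma: SNI ARE lemma} and to guarantee that the extracted $P$, $Z$ are positive definite and that the projections defining $F$, $L$ are well posed.
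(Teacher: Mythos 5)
Your proposal follows essentially the same route as the paper: sufficiency is established by verifying the closed-loop NI ARE of Lemma~\ref{lemma: NI ARE lemma} with a block-diagonal certificate $\Sigma=\operatorname{diag}(P,V)$ in the shifted coordinates $(x^T,x^T-x_k^T)^T$, where the $(2,2)$ block is precisely the DGKF-style object you anticipate (the paper obtains it as the matrix $V$ of Corollary~\ref{collary: Building V}, constructed from $W=Z(I-PZ)^{-1}$ via the dual positive real lemma rather than by an explicit closed form), and necessity proceeds exactly as you sketch, by applying Lemma~\ref{lemma: SNI ARE lemma} to the minimal SNI closed loop and extracting $P$ as the Schur complement $\Sigma_{11}-\Sigma_{12}\Sigma_{22}^{-1}\Sigma_{12}^T$, $Z=\Sigma_{11}^{-1}$, $F=C_k\tilde{E}$ and $L=\bar{E}B_k$ through two congruence transformations, with $Z^{-1}-P=\Sigma_{12}\Sigma_{22}^{-1}\Sigma_{12}^T\geq 0$ yielding $\rho(ZP)\leq 1$. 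The one ingredient you leave unresolved, the exact $(2,2)$ block and the verification that the cross terms vanish, is exactly what the paper's Corollary~\ref{collary: Building V} and its $X_{11},X_{21},X_{22}$ computation supply, so your outline matches the authors' argument.
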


\begin{corollary} \cite{salcan2019negative}
Consider the uncertain system (\ref{system: main a})-(\ref{system: main c}) satisfying assumptions A1-A\ref{Last Asummption} and suppose there exist $P>0$, $Z>0$, $F$ and $L$ which satisfy Theorem~\ref{theorem: NI outputfeedback sufficiency}. If the matrix $A_c$ is Hurwitz, then the resulting closed-loop system is SNI.
\end{corollary}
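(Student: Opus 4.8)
The plan is to derive the Corollary from the sufficiency part of Theorem~\ref{theorem: NI outputfeedback sufficiency}, strengthening its conclusion from NI to SNI with the help of the extra hypothesis that $A_c$ is Hurwitz. First I would invoke Theorem~\ref{theorem: NI outputfeedback sufficiency}: since $P>0$, $Z>0$, $F$, $L$ satisfy (a), (b) and $\rho(ZP)<1$, the controller (\ref{controller: xdot})--(\ref{controller: y}) produces the closed-loop realisation (\ref{closed loop realization x.}), whose transfer function $G_{cl}(s)$ is NI, has $D_{cl}=0$, and satisfies $C_cB_c+B_c^{T}C_c^{T}=C_1B_1+B_1^{T}C_1^{T}=R>0$ by assumption~A\ref{Last Asummption}.

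Next I would use $A_c$ Hurwitz to reduce everything to a single strict inequality. Every pole of $G_{cl}$ is an eigenvalue of $A_c$, so $G_{cl}$ has no pole in $\{\operatorname{Re}[s]\ge0\}$; this is condition~1 of Definition~\ref{def: SNI definition}, and it also makes conditions 3 and 4 of Definition~\ref{def: NI definition} vacuous. As $G_{cl}$ is NI we already have $j(G_{cl}(j\omega)-G_{cl}(j\omega)^*)\ge0$ for all $\omega\ge0$, so the Corollary reduces to proving the strict inequality $j(G_{cl}(j\omega)-G_{cl}(j\omega)^*)>0$ for every $\omega>0$. For this I would apply the Riccati characterisation of SNI systems in Lemma~\ref{lemma: SNI ARE lemma} to the closed loop, which is legitimate because $C_cB_c+B_c^{T}C_c^{T}=R>0$. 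By Lemma~\ref{lemma: NI ARE lemma} (applied to a minimal realisation of $G_{cl}$, whose state matrix is still Hurwitz and whose $CB+B^{T}C^{T}$ is still $R$) the closed-loop ARE $P_{cl}A_c+A_c^{T}P_{cl}+(C_cA_c-B_c^{T}P_{cl})^{T}R^{-1}(C_cA_c-B_c^{T}P_{cl})=0$ has a solution $P_{cl}\ge0$; since $A_c$ is Hurwitz this $P_{cl}$ is the unique solution of the associated Lyapunov equation, and minimality together with $A_c$ nonsingular forces $P_{cl}>0$, because any nonzero vector in $\ker P_{cl}$ would be an unobservable mode of $(A_c,\,C_cA_c-B_c^{T}P_{cl})$. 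Equivalently, one may construct $P_{cl}>0$ directly as a congruence of a block-diagonal matrix built from $P$ and $(Z^{-1}-P)^{-1}$, which is well defined and positive precisely because $P,Z>0$ and $\rho(ZP)<1$, mirroring the closed-loop certificate produced in the proof of Theorem~\ref{theorem: NI outputfeedback sufficiency}. It then remains to check the spectral condition of Lemma~\ref{lemma: SNI ARE lemma}(2): all eigenvalues of $A_c-B_cR^{-1}(C_cA_c-B_c^{T}P_{cl})$ lie in the open left half plane or at the origin. I would verify this by transforming (\ref{closed loop realization x.}) into observer/state-feedback coordinates, in which both $P_{cl}$ and this matrix become block structured with the blocks governed by the state-feedback matrix $A+B_2F$ and the output-injection matrix $A+LC_2$; $A_c$ Hurwitz then pins down their spectra. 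With $P_{cl}>0$ and this eigenvalue condition established, Lemma~\ref{lemma: SNI ARE lemma}, (2)$\Rightarrow$(1), yields that $G_{cl}(s)$ is SNI.

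I expect the main obstacle to be precisely this last point --- establishing the eigenvalue-location requirement of Lemma~\ref{lemma: SNI ARE lemma}(2) for the closed-loop data from the hypothesis that $A_c$ is Hurwitz. The natural route is a careful similarity reduction of (\ref{closed loop realization x.}) into decoupled state-feedback and output-injection parts, reusing the decoupling already exploited in the proof of Theorem~\ref{theorem: NI outputfeedback sufficiency}, so that the spectrum of $A_c-B_cR^{-1}(C_cA_c-B_c^{T}P_{cl})$ splits into pieces each controlled by a Hurwitz sub-block; getting the bookkeeping of that reduction exactly right, and confirming that no eigenvalue escapes into the open right half plane, is where the real work lies. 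A minor caveat is that (\ref{closed loop realization x.}) need not be minimal, but since SNI is a transfer-function property one simply passes to a minimal realisation, whose state matrix remains Hurwitz and whose first Markov parameter is unchanged, so that assumption~A\ref{Last Asummption} still supplies the required $R>0$.
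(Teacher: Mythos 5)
The paper does not actually prove this corollary --- it is stated with a citation to \cite{salcan2019negative} and no argument appears in the appendix --- so there is no in-paper proof to compare against; I am judging your proposal on its own merits.

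Your skeleton is the right one (reduce SNI to statement (2) of Lemma~\ref{lemma: SNI ARE lemma} for a minimal realisation of $G_{cl}$), and several pieces are sound: $C_cB_c+B_c^TC_c^T=C_1B_1+B_1^TC_1^T=R>0$ survives passage to a minimal realisation because it is the first Markov parameter; $A_c$ Hurwitz disposes of the pole conditions; and your kernel argument for $P_{cl}>0$ (that $\ker P_{cl}$ is $A_c$-invariant, annihilated by $C_cA_c$, hence mapped by the nonsingular $A_c$ into an unobservable subspace of $(C_c,A_c)$, contradicting minimality) is correct. But the proof has a genuine gap at exactly the point you flag: the eigenvalue-location requirement of Lemma~\ref{lemma: SNI ARE lemma}(2), namely that $A_c-B_cR^{-1}(C_cA_c-B_c^TP_{cl})$ has spectrum in the open left half plane or at the origin. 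This condition is not decorative --- it is precisely what converts $j(G_{cl}(j\omega)-G_{cl}(j\omega)^*)\ge 0$ into the strict inequality (it is the invertibility of the spectral factor away from $\omega=0$), and it does not follow from $A_c$ being Hurwitz by any generic argument: writing $\hat A=A_c-B_cK$ with $K=R^{-1}(C_cA_c-B_c^TP_{cl})$, the ARE only yields $P_{cl}\hat A+\hat A^TP_{cl}=-K^TC_cA_c-P_{cl}B_cK$, which is not sign-definite. Your stated plan (``transform into observer/state-feedback coordinates\ldots $A_c$ Hurwitz then pins down their spectra'') is an announcement of the hard step, not a proof of it; as written, the argument only re-establishes that $G_{cl}$ is a stable NI system, which is strictly weaker than SNI (a stable NI transfer function can fail the strict frequency-domain inequality). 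A minor further quibble: the claim that $P_{cl}$ is ``the unique solution of the associated Lyapunov equation'' is misplaced --- the NI ARE is quadratic in $P_{cl}$ and uniqueness is neither available nor needed; your subsequent kernel argument is the one that does the work.
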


\begin{remark}
The controller (\ref{controller: xdot})-(\ref{controller: y}) is applied to a system for which the plant uncertainty is assumed to be SNI. If the dc gain condition of \cite{Lanzon2008} is satisfied, then the resulting positive feedback interconnection of the plant uncertainty with the closed-loop transfer function is guaranteed to be robustly stable.
\end{remark}

The following lemma is needed in the proof of Theorem~\ref{theorem: NI outputfeedback sufficiency}.

\begin{corollary}\label{collary: Building V}
Consider the uncertain system (\ref{system: main a})-(\ref{system: main c}) satisfying assumptions A1-A\ref{Last Asummption}. If there exists $P\geq0$, $Z\geq0$, $F$ and $L$ which satisfy Theorem~\ref{theorem: NI outputfeedback sufficiency}, then there exists a matrix $V\geq0$ which satisfies the ARE
\begin{align}\label{math: ARE using V}
  V(A_e + L_eC_{e2}) + (A_e + L_eC_{e2})^TV + Q_v^TR^{-1}Q_v = 0,
\end{align}
where
\begin{align*}
  A_e &= A - B_1R^{-1}C_1(A+B_2F) + B_1R^{-1}B_1^TP, \\
  C_{e1} &= C_1B_2F, \\
  C_{e2} &= C_2 - D_{21}R^{-1}C_1(A+B_2F) + D_{21}R^{-1}B_1^TP, \\
  Q_v &= C_{e1} -(B_1+L_eD_{21})V.
\end{align*}
\end{corollary}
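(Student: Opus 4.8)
\emph{Proof strategy.}

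The plan is to build $V$ explicitly from the Riccati solutions $P$ and $Z$ supplied by conditions (a) and (b) of Theorem~\ref{theorem: NI outputfeedback sufficiency}, and then to verify the identity (\ref{math: ARE using V}) by substitution. Condition (c) gives $\rho(ZP)<1$, so $I-ZP$ and $I-PZ$ are both nonsingular, and the candidate I would take is
\[
  V \;:=\; (I-PZ)^{-1}P \;=\; P(I-ZP)^{-1},
\]
the two forms agreeing by the push-through identity $(I-XY)^{-1}X = X(I-YX)^{-1}$. This $V$ is symmetric and positive semidefinite: writing $V = P^{1/2}\bigl(I - P^{1/2}ZP^{1/2}\bigr)^{-1}P^{1/2}$, again by the push-through identity, and using that the nonzero eigenvalues of $P^{1/2}ZP^{1/2}$ are among those of $ZP$, we get $\rho\bigl(P^{1/2}ZP^{1/2}\bigr)\le\rho(ZP)<1$, hence $I - P^{1/2}ZP^{1/2}>0$ and therefore $V\ge0$. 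It is also useful to record the fixed-point relations $V = P + PZV = P + VZP$, which follow at once from $V(I-ZP)=P$.

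With this $V$ the remainder is algebra. The relevant output-injection gain is $L_e = -B_k = (I-ZP)^{-1}L$, the gain governing the error dynamics of the closed loop formed with the controller (\ref{controller: C_k choice})--(\ref{controller: A_k choice}); using $\tilde A = A+B_2F$ one checks $A_e + L_eC_{e2} = (A+L_eC_2) - (B_1+L_eD_{21})R^{-1}\tilde Q$, so the left-hand side of (\ref{math: ARE using V}) is a quadratic in $V$ whose coefficients involve only the original data together with $\tilde Q$. Substituting $V = P(I-ZP)^{-1}$ and moving every factor $(I-ZP)^{-1}$ to the outside by the push-through identity and the fixed-point relations above, I would recast (\ref{math: ARE using V}) as a combination of the two given equations: the terms that collapse to an expression in $P$ alone should vanish by $R(P)=0$, and what is left should vanish by $S(Z)=0$ once that equation is pre- and post-multiplied by $(I-PZ)^{-1}$ and $(I-ZP)^{-1}$ --- and condition (c) is exactly what legitimises this congruence.

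The step I expect to be the real obstacle is this last verification, and the difficulty is a mismatch of shapes: (\ref{math: ARE using V}) is written in the ``primal'' state-feedback form of (\ref{math: NI ARE}), whereas condition (b) is in the ``dual'' output-injection form of (\ref{math: NI ARE DUAL}), so the congruence by $I-ZP$ must be carried through carefully, keeping track of how $L_e$ enters both $A_e+L_eC_{e2}$ and $B_1+L_eD_{21}$ and of the fact that $A_e$ and $C_{e2}$ already carry the $P$-dependent corrections $B_1R^{-1}B_1^TP$ and $D_{21}R^{-1}B_1^TP$. Should the direct computation become unwieldy, a fallback is to avoid exhibiting $V$ in closed form: identify the transfer function of the closed-loop error subsystem, show it is positive real using Lemma~\ref{lemma: G NI implies sH PR} together with the NI/PR consequences of conditions (a) and (b) (via Lemmas~\ref{lemma: NI ARE lemma} and~\ref{lemma: dual output injection problem}), and then read off a positive semidefinite $V$ from that positive-real characterisation. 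In either route the eigenvalue condition $\rho(ZP)<1$ is what couples the two Riccati solutions and makes the argument close.
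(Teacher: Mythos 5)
Your primary route fails at its very first step: the candidate $V := (I-PZ)^{-1}P = P(I-ZP)^{-1}$ does not satisfy (\ref{math: ARE using V}). The quadratic term there is $Q_v^TR^{-1}Q_v$ with $Q_v = C_{e1}-(B_1+L_eD_{21})V$ and $C_{e1}=C_1B_2F$, and $C_{e1}$ depends on $F$, not on $P$ or $Z$, so it does not vanish when $P$ and $Z$ do. The paper's own example makes this concrete: there $P=0$ and $Z=0$, so your formula forces $V=0$, hence $Q_v = C_1B_2F = \begin{bmatrix}1&0&0\end{bmatrix}\neq 0$ and the left-hand side of (\ref{math: ARE using V}) reduces to $C_{e1}^TR^{-1}C_{e1}\neq 0$; yet the corollary's conclusion does hold in that example, with a genuinely nonzero $V$ (the displayed $\Sigma_{22}$ block). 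The matrix that \emph{does} admit a closed form in the data is $W = Z(I-PZ)^{-1}$ --- built from $Z$, not from $P$ --- and what it solves is the \emph{dual} (controllability-type) equation
\[
W(A_e+L_eC_{e2})^T + (A_e+L_eC_{e2})W + Q_eR^{-1}Q_e^T = 0,\qquad Q_e = B_1+L_eD_{21}-WC_{e1}^T,
\]
not the primal equation (\ref{math: ARE using V}). So your proposal both swaps the roles of $P$ and $Z$ and conflates the dual solution with the primal one whose existence the corollary asserts.

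Your one-sentence ``fallback'' is in fact the paper's actual argument, and it is the part that needs to be developed rather than held in reserve. Starting from condition (b), a congruence by $(I-ZP)$ (legitimised by condition (c), exactly as you anticipate) turns $S(Z)=0$ into the dual ARE above for $W=Z(I-PZ)^{-1}\ge 0$, after rewriting $A_w = A - B_1D_{21}^{-1}C_2$ and the associated quadratic term in terms of $A_e$, $C_{e1}$, $C_{e2}$. By Lemma~\ref{lemma: dual output injection problem} the error system $(A_e+L_eC_{e2},\,B_1+L_eD_{21},\,C_{e1})$ is then positive real, and the Dual Positive Real Lemma of \cite{anderson1967dual} yields the \emph{existence} of some $V\ge 0$ solving (\ref{math: ARE using V}); no explicit formula for $V$ in terms of $P$ and $Z$ is given, nor is one needed for the sufficiency proof of Theorem~\ref{theorem: NI outputfeedback sufficiency}. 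To repair your write-up, promote the positive-real/duality argument to the main line and delete the explicit candidate.
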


\section{Illustrative example}

Consider the linear uncertain system (\ref{system: main a})-(\ref{system: main c}), where
\begin{align} \label{example system}
A =  \begin{bmatrix}
    \begin{tabular}{ c c c }
$-1$ & $0$ & $0$ \\
$0$ & $-1$ & $1$ \\
 $1$ & $1$ & $-1$
    \end{tabular}
\end{bmatrix}, \
B_1 = \begin{bmatrix}
    \begin{tabular}{ c }
    $1$ \\
    $2$  \\
    $1$
    \end{tabular}
\end{bmatrix}, \  B_2 = \begin{bmatrix}
    \begin{tabular}{ c }
    $1$ \\
    $1$  \\
    $1$
    \end{tabular}
\end{bmatrix}, \nonumber\\
C_1 = \begin{bmatrix}
    \begin{tabular}{c c c}
    $1$ & $0$  & $0$
    \end{tabular}
\end{bmatrix}, \
C_2 = \begin{bmatrix}
    \begin{tabular}{c c c}
    $1$ & $2$  & $0$
    \end{tabular}
\end{bmatrix}, \
D_{21} = I.
\end{align}
We can easily verify this system satisfies A1-A\ref{Last Asummption}. Also, the matrices $P=0$ and $Z=0$ satisfy conditions (a)-(c) of Theorem~\ref{theorem: NI outputfeedback sufficiency}. Thus we can construct matrices $F= \begin{bmatrix}
    \begin{tabular}{c c c}
    $1$ & $0$  & $0$
    \end{tabular}
\end{bmatrix}$ and $L = \begin{bmatrix}
    \begin{tabular}{c c c}
    $-1$ & $-2$  & $-1$
    \end{tabular}
\end{bmatrix}^T$ and our dynamic compensator with the form (\ref{controller: xdot}), (\ref{controller: y}) can be constructed as
\begin{align*}
  \begin{bmatrix}
    \begin{tabular}{ c c c | c}
$-1$ & $-2$ & $0$ & $1$ \\
$-1$ & $-5$ & $1$ & $2$ \\
 $1$ & $-1$ & $-1$ & $1$ \\ \hline
 $1$ & $0$ & $0$ & $0$
    \end{tabular}
\end{bmatrix}.
\end{align*}

We can now verify our closed-loop system with transfer function $G_{cl}(s) = C_{cl}\big(sI - A_{cl}\big)^{-1}B_{cl}$, where
\begin{align*}
A_{cl} &=  \begin{bmatrix}
    \begin{tabular}{ c c c c c c}
$0$ & $0$ & $0$ & $-1$ & $0$ & $0$ \\
$1$ & $-1$ & $1$ & $-1$ & $0$ & $0$ \\
 $2$ & $1$ & $-1$ & $-1$ & $0$ & $0$ \\
 $0$ & $0$ & $0$ & $-2$ & $-2$ & $0$ \\
$0$ & $0$ & $0$ & $-2$ & $-5$ & $1$ \\
 $0$ & $0$ & $0$ & $0$ & $-1$ & $-1$
    \end{tabular}
\end{bmatrix},
B_{cl} = \begin{bmatrix}
    \begin{tabular}{ c }
    $1$ \\
    $2$  \\
    $1$ \\
    $0$ \\
    $0$  \\
    $0$
    \end{tabular}
\end{bmatrix}, \\
C_{cl} &= \begin{bmatrix}
    \begin{tabular}{c c c c c c}
    $1$ & $0$  & $0$ & $0$  & $0$ & $0$
    \end{tabular}
\end{bmatrix},
\end{align*}
satisfies the the ARE
\begin{align*}
\Sigma A_{cl} + A_{cl}^T\Sigma \nonumber\\
& \omit\hfill ${} + (C_{cl}A_{cl}-B_{cl}^T\Sigma)^TR^{-1}(C_{cl}A_{cl} - B_{cl}^T\Sigma) = 0,$
\end{align*}
with
\begin{align*}
 \Sigma &=  \begin{bmatrix}
        \begin{tabular}{ c c c c c c}
  $0$ & $0$ & $0$ & $0$ & $0$ & $0$ \\
  $0$ & $0$ & $0$ & $0$ & $0$ & $0$ \\
  $0$ & $0$ & $0$ & $0$ & $0$ & $0$ \\
  $0$ & $0$ & $0$ & $0.178$ & $-0.053$ & $-0.024$ \\
  $0$ & $0$ & $0$ & $-0.053$ & $0.019$ & $0.010$ \\
  $0$ & $0$ & $0$ & $-0.024$ & $0.010$ & $0.010$
    \end{tabular}
    \end{bmatrix} \geq 0.
\end{align*}
Therefore it follows from Lemma~\ref{lemma: NI ARE lemma} that $G_{cl}(s)$ is NI.

\section{Conclusions}

This paper has presented both necessary and sufficient conditions for synthesizing a dynamic controller which solves the negative imaginary output feedback control problem. Our method divides the output feedback control problem into a state feedback problem and a dual output injection problem. These problems are then solved by finding the solutions to a pair of algebraic Riccati equations. The solutions to these equations may also be obtained through Schur decomposition which avoids the common problem of singular Hamiltonians in NI controller synthesis. Relaxing the necessary conditions for NI controller synthesis to include non strict solutions to the NI ARE remains an open problem.

\section{Appendix: Proofs}

\begin{proof}[Proof of Lemma~\ref{lemma: NI ARE lemma}]
The proof of equivalence between 1) and 2) is shown in \cite{Mabrok2015}.
1) $\iff$ 3) follows using the same proof applied to the transfer function matrix $G(s)^T$ which is NI if and only if $G(s)$ is NI by Lemma~\ref{lemma: Gs NI iff Gs^T NI definition}.
\end{proof}

\begin{proof}[Proof of Lemma~\ref{lemma: SNI ARE lemma}]
The proof of equivalence between 1) and 2) is shown in \cite{Mabrok2015}.
1) $\iff$ 3) follows using the same proof applied to the transfer function matrix $G(s)^T$ which is SNI if and only if $G(s)$ is SNI by Lemma~\ref{lemma: Gs NI iff Gs^T NI definition}. Also, note that $G(s)$ is SNI implies $A^T$ has no imaginary-axis eigenvalues from Definition~\ref{def: SNI definition}.
\end{proof}

\begin{proof}[Proof of Lemma~\ref{lemma: Z equals P inverse}]
This is proved by straight forward algebraic manipulation. Noting that $Z=P^{-1}$ we simply pre-multiply (\ref{math: NI ARE}) by $Z$ and post-multiply by $Z$ as follows
\begin{align*}
  0 &= PA + A^TP + (CA-B^TP)^TR^{-1}(CA-B^TP), \\
  0 &= Z\bigg(PA + A^TP + (CA-B^TP)^TR^{-1} \\
  & \omit\hfill ${} \cdot(CA-B^TP)\bigg)Z$, \\
  0 &= Z{A}^T + AZ +(B^T-CAZ)^TR^{-1}(B^T-CAZ).
  \end{align*}
\end{proof}

\begin{proof}[Proof of Lemma~\ref{lemma: dual output injection problem}]
The equivalence between $2) \iff 3)$ follows from Lemma 2.3 in \cite{sun1994solution}.
In order to show $1) \iff 2)$ note that from Lemma~\ref{lemma: G NI implies sH PR}, condition 1) holds if and only if $s(G(s)-D)$ is PR. If we then consider the Dual Positive Real Lemma in \cite{anderson1967dual}, $1) \iff 2)$ follows directly.
\end{proof}

\begin{proof}[Proof of Corollary~\ref{collary: Building V}]
This proof is analogous to finding the W matrix in the sufficiency proof of Theorem 3.1 in \cite{petersen1991first}.
\\
It follows from condition (b) of Theorem~\ref{theorem: NI outputfeedback sufficiency} that the ARE (\ref{math: NI ARE DUAL}) has solution $Z\geq0$. Let $A_w = A - B_1D_{21}^{-1}C_2$ and $R_w = PA_w + A_w^TP + R_z$,
where
\begin{multline*}
 R_z = A^TC_1^TD_{21}^{-1}C_2 + (A^TC_1^TD_{21}^{-1}C_2)^T \\
 - (D_{21}^{-1}C_2)^TR(D_{21}^{-1}C_2).
\end{multline*}
We can manipulate (\ref{math: NI ARE DUAL}) as follows
\begin{align}
  Z\bar{A}^T + \bar{A}Z + \bar{Q}R^{-1}\bar{Q}^T &= 0,  \nonumber\\
  (A - B_1D_{21}^{-1}C_2)Z + Z(A - B_1D_{21}^{-1}C_2) + R_z &= 0, \nonumber\\
  A_wZ + ZA_w^T + Z(R_w - PA_w - A_w^TP)Z &= 0,  \nonumber\\
  A_wZ - ZPA_wZ + ZA_w^T - ZA_w^TPZ + ZR_wZ &= 0,  \nonumber\\
  (I - ZP)A_wZ + ZA_w^T(I-PZ) + ZR_wZ &= 0, \nonumber\\
  A_wW + WA_w^T + WR_w &= 0, \label{math: w equation}
\end{align}
  where
\begin{align*}
    \bar{A} &= A + LC_2, &
    \bar{Q} &= B_1+LD_{21}-ZA^TC_1^T.
\end{align*}
It follows from condition (c) of Theorem~\ref{theorem: NI outputfeedback sufficiency} that $W=Z(I-PZ)^{-1}$ is positive semi-definite and well defined. After algebraic manipulation we can rewrite $A_w$ and $R_w$ as
\begin{align*}
    A_w &= A_e + B_1R^{-1}C_{e1} - B_1D_{21}^{-1}(C_{e2}+D_{21}R^{-1}C_{e1}), \\
    R_w &= C_{e1}^TR^{-1}C_{e1} \\
    & \omit\hfill ${}  - (D_{21}^{-1}C_{e2}+RC_{e1})^TR^{-1}(D_{21}^{-1}C_{e2}+RC_{e1}),$
\end{align*}
where
\begin{align*}
    A_e &= A - B_1R^{-1}C_1(A+B_2F) + B_1R^{-1}B_1^TP, \\
    C_{e1} &= C_1B_2F, \\
   C_{e2} &= C_2 - D_{21}R^{-1}C_1(A+B_2F) + D_{21}R^{-1}B_1^TP. \\
\end{align*}
If we then substitute these into (\ref{math: w equation}), we see that $W\geq0$ is a solution to the ARE
\begin{align*}
  W(A_e + L_eC_{e2})^T + (A_e + L_eC_{e2})W + Q_eR^{-1}Q_e^T = 0,
\end{align*}
where
\begin{align*}
 L_e &= -\big( B_1R^{-1} + W(D_{21}^{-1}C_{e2}-R^{-1}C_{e1})^T \big)\big(R^{-1}D_{21}^T\big)^{-1}, \\
 Q_e &= B_1+L_eD_{21}-WC_{e1}^T.
\end{align*}
It then follows from Lemma~\ref{lemma: dual output injection problem} that the system
\begin{align*}
\begin{bmatrix}
    \begin{tabular}{ c | c }
  $(A_e + L_eC_{e2})^T$ & $C_{e1}^T$ \\ \hline
  $(B_1+L_eD_{21})^T$ & $B_1^TC_1^T$
\end{tabular}
\end{bmatrix}
\end{align*}
is PR. Therefore it follows from the Dual Positive Real Lemma in \cite{anderson1967dual} that the dual system
\begin{align*}
\begin{bmatrix}
    \begin{tabular}{ c | c }
  $A_e + L_eC_{e2}$ & $B_1+L_eD_{21}$ \\ \hline
  $C_{e1}$ & $C_1B_1$
\end{tabular}
\end{bmatrix}
\end{align*}
has a solution $V\geq0$ that satisfies the ARE
\begin{align*}
  V(A_e + L_eC_{e2}) + (A_e + L_eC_{e2})^TV + Q_v^TR^{-1}Q_v = 0,
\end{align*}
where
\begin{align*}
 Q_v &= C_{e1} -(B_1+L_eD_{21})V.
\end{align*}
\end{proof}

We now offer a proof for our main result.

\begin{proof}[Proof of Theorem~\ref{theorem: NI outputfeedback sufficiency}]
Consider the closed-loop system with realization (\ref{closed loop realization x.}). If we choose $(x^T,x^T - x_k^T)^T$ as our state vector then our new realization can be represented by the transfer function $G_{cl}(s) = \begin{bmatrix}
    \begin{tabular}{ c | r }
  $A_{cl}$ & $B_{cl}$ \\ \hline
  $C_{cl}$ & $0$
\end{tabular}
\end{bmatrix}$, where
\begin{align*}
A_{cl} &= \begin{bmatrix}
    \begin{tabular}{ c c }
  $A+B_2C_k$ & $-B_2C_k$ \\
  $A-A_k+B_2C_k-B_kC_2$ & $A_k-B_2C_k$
\end{tabular}
\end{bmatrix}, \\
B_{cl} &=
\begin{bmatrix}
    \begin{tabular}{ c }
  $B_1$ \\
  $B_1-B_kD_{21}$
\end{tabular}
\end{bmatrix}, \quad \quad \quad
C_{cl} =
\begin{bmatrix}
    \begin{tabular}{ c c}
  $C_1$ & $0$
\end{tabular}
\end{bmatrix}.
\end{align*}

We will show sufficiency by showing that $G_{cl}(s)$ with the controller given by (\ref{controller: C_k choice})-(\ref{controller: A_k choice}) satisfies the NI ARE
\begin{align} \label{math:riccati closed loop}
  X(\Sigma) &= \begin{bmatrix}
    \begin{tabular}{ l r }
  $X_{11}$ & $X_{21}^T$ \\
  $X_{21}$ & $X_{22}$
\end{tabular}
\end{bmatrix}
= \Sigma A_{cl} + A_{cl}^T\Sigma \nonumber\\
& \omit\hfill ${} + (C_{cl}A_{cl}-B_{cl}^T\Sigma)^TR^{-1}(C_{cl}A_{cl} - B_{cl}^T\Sigma) = 0$, \nonumber\\
\end{align}
with a suitable choice of $\Sigma$. Thus by Lemma~\ref{lemma: NI ARE lemma} the closed-loop system is NI. It follows from condition (c) that the controller(\ref{controller: C_k choice})-(\ref{controller: A_k choice}) is well defined. Using the definition of $P$ given in the conditions of the theorem and the definition of $V$ given in Corollary~\ref{collary: Building V}, we may define $\Sigma$ as
\begin{align}
 \Sigma &=  \begin{bmatrix}
        \begin{tabular}{ l r }
        $P$     & $0$  \\
        $0$   & $V$
      \end{tabular}
    \end{bmatrix} \geq 0.
\end{align}

We now decompose (\ref{math:riccati closed loop}) and calculate
\begin{align*}
 X_{11} &= P(A+B_2C_K) + (A+B_2C_K)^TP \\
  & \omit\hfill ${}+ \big(C_1(A+B_2C_K)-B_1^TP\big)^TR^{-1}$ \\
  & \omit\hfill ${}\cdot\big(C_1(A+B_2C_K)-B_1^TP\big),$ \\
 X_{21} &= V(A-A_k+B_2C_K-B_kC_2) - (B_2C_K)^TP\\
 & \omit\hfill ${} - \big((C_1B_2C_K)^T + V(B_1-B_kD_{21})\big)R^{-1}$ \\
  & \omit\hfill ${}\cdot\big(C_1(A+B_2C_K)-B_1^TP\big),$ \\
 X_{22} &= V(A_k-B_2C_K) + (A_k-B_2C_K)^TV \\
 & \omit\hfill ${} + \big((C_1B_2C_K)^T + V(B_1-B_kD_{21})\big)R^{-1}$ \\
 & \omit\hfill ${} \cdot\big((C_1B_2C_K)^T + V(B_1-B_kD_{21})\big)^T.$
\end{align*}

Our choice of $C_k = F$ results in $X_{11} = R(P)$ and $X_{11} = 0$ follows from condition (a). Similarly, $A_k$ and $B_k$ result in algebraic cancellation leading to $X_{21} = 0$. Finally, after substitution and appropriate algebraic manipulation we are left with $X_{22} = (\ref{math: ARE using V}) = 0$. Thus, (\ref{math:riccati closed loop}) is satisfied and $G_{cl}(s)$ is NI.
\\\\
The necessity of conditions (a),(b),(c) can be proven as follows.
Assume a strictly proper controller exists such that the closed-loop transfer function (\ref{math: closed-loop transfer function}) is strictly negative imaginary and minimal. Then by Lemma~\ref{lemma: SNI ARE lemma}, there exists a matrix
\begin{align*}
 \Sigma &=  \begin{bmatrix}
        \begin{tabular}{ l r }
        $\Sigma_{11}$     & $\Sigma_{12}$  \\
        $\Sigma_{12}^T$   & $\Sigma_{22}$
      \end{tabular}
    \end{bmatrix} > 0,
\end{align*}
such that
\begin{IEEEeqnarray}{c} \label{math: closed-loop riccati equation}
  \Sigma A_{c} + A_{c}^T \Sigma + (B_c^T\Sigma-C_cA_c)^TR^{-1}(B_c^T\Sigma-C_cA_c) = 0, \nonumber\\
\end{IEEEeqnarray}
where
\begin{align*}
 A_c &=  \begin{bmatrix}
        \begin{tabular}{ l r }
        $A$     & $B_2C_k$  \\
        $B_kC_2$   & $A_k$
      \end{tabular}
    \end{bmatrix}, &
 B_c &= \begin{bmatrix}
        \begin{tabular}{ c }
        $B_1$  \\
        $B_kD_{21}$
      \end{tabular}
    \end{bmatrix}, \\
 C_c &= \begin{bmatrix}
        \begin{tabular}{ l r }
        $C_1$  &
        $0$
      \end{tabular}
    \end{bmatrix}, &
 R &= C_cB_c + B_c^TC_c^T >0.
\end{align*}

Define the following transformation matrix $\tilde{T}$ which will be used to diagonalize our system:
\begin{align*}
 \tilde{T} &=  \begin{bmatrix}
        \begin{tabular}{ l r }
        $I$     & $0$  \\
        $\tilde{E}$   & $I$
      \end{tabular}
    \end{bmatrix}, & \tilde{E} = -\Sigma_{22}^{-1}\Sigma_{12}^T.
\end{align*}

We can use this transformation matrix to define the following matrices:
\begin{align*}
 \tilde{\Sigma} &=  \tilde{T}^T\Sigma\tilde{T} = \begin{bmatrix}
        \begin{tabular}{ l r }
        $P$     & $0$  \\
        $0$   & $\Sigma_{22}$
      \end{tabular}
    \end{bmatrix}, \\
\tilde{A} &= \tilde{T}^{-1}A_{c}\tilde{T}  = \begin{bmatrix}
        \begin{tabular}{ l r }
        $A+B_2C_K\tilde{E}$     & $*$  \\
        $*$   & $*$
      \end{tabular}
    \end{bmatrix}, \\
\tilde{B} &=  \tilde{T}^{-1}B_c = \begin{bmatrix}
        \begin{tabular}{ c }
        $B_1$ \\
        $*$
      \end{tabular}
    \end{bmatrix}, \\
\tilde{C} &=  C_cA_c\tilde{T} = \begin{bmatrix}
        \begin{tabular}{ l r }
        $C_1A + C_1B_2C_k\tilde{E}$ &
        $*$
      \end{tabular}
    \end{bmatrix},
\end{align*}
where
\begin{align}
  P &= \Sigma_{11} - \Sigma_{12}\Sigma_{22}^{-1}\Sigma_{12}^T > 0. \label{Definition of P necc proof}
\end{align}

If we pre-multiply (\ref{math: closed-loop riccati equation}) by $\tilde{T}^T$ and post-multiply by $\tilde{T}$, we are left with the following equality:
\begin{IEEEeqnarray*}{c}
  \tilde{\Sigma}\tilde{A} + \tilde{A}^T\tilde{\Sigma} + (\tilde{B}\tilde{\Sigma}-\tilde{C})^TR^{-1}(\tilde{B}\tilde{\Sigma}-\tilde{C}) = 0.
\end{IEEEeqnarray*}
The (1,1) block matrix of this equality satisfies
\begin{IEEEeqnarray}{c} \label{math: condition (1) state-feedback riccati equation}
  P(A+B_2C_K\tilde{E}) + (A+B_2C_K\tilde{E})^TP + \bar{Q}^TR^{-1}\bar{Q} = 0, \nonumber\\
\end{IEEEeqnarray}
 where
\begin{align*}
  \bar{Q} &= B_1P-C_1A-C_1B_2C_k\tilde{E}.
\end{align*}
By choosing $F = C_k\tilde{E}$ we can conclude that $P> 0$ and $F$ satisfy condition (a).
\\\\
Now in order to show condition (b) is satisfied, define the following transformation matrix
\begin{align*}
 \bar{T} &=  \begin{bmatrix}
        \begin{tabular}{ l r }
        $I$     & $-\bar{E}$  \\
        $0$   & $I$
      \end{tabular}
    \end{bmatrix}, & \bar{E} = \Sigma_{11}^{-1}\Sigma_{12}.
\end{align*}

We use this transformation matrix to define the following matrices:
\begin{align*}
 \bar{\Sigma} &=  \bar{T}^T\Sigma\bar{T} = \begin{bmatrix}
        \begin{tabular}{ l r }
        $\bar{\Sigma}_{11}$     & $0$  \\
        $0$   & $*$
      \end{tabular}
    \end{bmatrix}, \\
\bar{A} &= \bar{T}^{-1}A_{c}\bar{T}  = \begin{bmatrix}
        \begin{tabular}{ l r }
        $A+\bar{E}B_KC_2$     & $*$  \\
        $*$   & $*$
      \end{tabular}
    \end{bmatrix}, \\
\bar{B} &=  \bar{T}^{-1}B_c = \begin{bmatrix}
        \begin{tabular}{ c }
        $B_1+\bar{E}B_kD_21$ \\
        $0$
      \end{tabular}
    \end{bmatrix}, \\
\bar{C} &=  C_cA_c\bar{T} = \begin{bmatrix}
        \begin{tabular}{ l r }
        $C_1A$ &
        $*$
      \end{tabular}
    \end{bmatrix}.
\end{align*}

If we pre-multiply (\ref{math: closed-loop riccati equation}) by $\bar{T}^T$ and post-multiply by $\bar{T}$, we are left with the following equality:
\begin{align*}
  \bar{\Sigma}_{11}\bar{A} + \bar{A}^T\bar{\Sigma}_{11} + (\bar{B}^T\bar{\Sigma}_{11}-\bar{C})^TR^{-1}(\bar{B}^T\bar{\Sigma}_{11}-\bar{C}) = 0.
\end{align*}
We now define the matrix
\begin{align*}
 \bar{Z} = \bar{\Sigma}^{-1} &=  \begin{bmatrix}
        \begin{tabular}{ l r }
        $\bar{\Sigma}_{11}^{-1}$     & $0$  \\
        $0$   & $*$
      \end{tabular}
    \end{bmatrix}
\end{align*}
and by Lemma~\ref{lemma: Z equals P inverse} the following equality is also satisfied:
\begin{align*}
  \bar{Z}\bar{A}^T+\bar{A}\bar{Z}+(\bar{B}^T-\bar{C}\bar{Z})^TR^{-1}(\bar{B}^T-\bar{C}\bar{Z}) = 0.
\end{align*}
Thus, the (1,1) block matrix of this equality satisfies
\begin{align*}
  Z(A+\bar{E}B_KC_2)^T + (A+\bar{E}B_KC_2)Z + \bar{Q}R^{-1}\bar{Q}^T= 0,
\end{align*}
 where
\begin{align}
  Z &= \bar{\Sigma}_{11}^{-1} > 0, \label{Definition of Z necc proof} \\
  \bar{Q} &= B_1+\bar{E}B_kD_21 - ZA^TC_1^T. \nonumber
\end{align}
By choosing $L = \bar{E}B_k$ we can conclude that $Z>0$ and $L$ satisfy condition (b).
\\\\
Finally, note that with $P$ and $Z$ defined as in (\ref{Definition of P necc proof}) and (\ref{Definition of Z necc proof}), we have
\begin{align*}
	Z^{-1} - P = & \quad {\Sigma}_{11} - (\Sigma_{11} - \Sigma_{12}\Sigma_{22}^{-1}\Sigma_{12}^T), \\
  =& \quad \Sigma_{12}\Sigma_{22}^{-1}\Sigma_{12}^T \geq 0, \\
  =& \quad Z^{-1}(Z - ZPZ)Z^{-1} \geq 0.
\end{align*}
Thus, condition (c) is satisfied.
\end{proof}


\bibliographystyle{IEEEtran}
\bibliography{root}


\end{document}